\documentclass[onethmnum,onefignum,final]{siamltex}

\usepackage{amsmath}
\usepackage{amssymb}
\usepackage{amsfonts}
\usepackage{mathrsfs}
\usepackage{bbm}
\usepackage{graphicx,color}
\usepackage{enumerate}

\renewcommand{\Re}{\operatorname*{Re}}
\renewcommand{\Im}{\operatorname*{Im}}
\newcommand{\real}{\operatorname*{Re}}

\def\mi{{\mathrm{i}\mkern1mu}}
\def\bone{\mathbbm{1}}
\def \veps{\varepsilon}
\def \z{\zeta}

\def \vfi{\varphi}
\def \bv{{\mathbf{b}}}
\def \cv{{\mathbf{c}}}
\def \Wv{{\mathbf{W}}}
\newcommand{\omegav}{\boldsymbol{\omega}}
\newcommand{\phiv}{\boldsymbol{\phi}}
\newcommand{\gv}{{\mathbf{g}}}
\newcommand{\ev}{{\mathbf{e}}}
\newcommand{\Ev}{{\mathbf{E}}}

\newcommand{\bR}{\mathbb{R}}

\newcommand{\IR}{\mathbb{R}}

\newcommand{\arccosh}{\operatorname{arccosh}}
\newcommand{\diam}{\operatorname{diam}}

\newtheorem{remark}[theorem]{Remark}
\newtheorem{assumption}[theorem]{Assumption}

\makeatletter\@addtoreset{equation}{section}\makeatother
\graphicspath{ {images/}{images}}

\begin{document}

\title{Fast and oblivious algorithms for dissipative and 2D wave equations}

\author{L. Banjai \thanks{The Maxwell Instute for Mathematical Sciences, School of Mathematical \& Computer Sciences, Heriot-Watt University, Edinburgh EH14 4AS, UK.
({\tt l.banjai@hw.ac.uk})} \and M. L\'opez-Fern\'andez\thanks{Dipartimento di Matematica Guido Castelnuovo, Sapienza Universit\`a di Roma, Piazzale Aldo Moro 5, 00185 Roma, Italy ({\tt lopez@mat.uniroma1.it}). Partially supported by the Spanish grants MTM2012-31298
and MTM2014-54710-P of the Ministerio de Economia y Competitividad.} \and A. Sch\"adle \thanks{Mathematisches Institut, Heinrich-Heine-Universit\"at, 40255 D\"usseldorf, Germany ({\tt schaedle@am.uni-duesseldorf.de})}}

\maketitle

\begin{abstract}
The use of time-domain boundary integral equations has proved very
effective and efficient for three dimensional acoustic and
electromagnetic wave equations. In even dimensions and when some
dissipation is present, time-domain boundary equations contain an
infinite memory tail. Due to this, computation for longer times
becomes exceedingly expensive. In this paper we show how oblivious
quadrature, initially designed for parabolic problems, can be used to
significantly reduce both the cost and the memory requirements of
computing this tail. We analyse Runge-Kutta based quadrature and
conclude the paper with numerical experiments.
\end{abstract}

\begin{keywords}
fast and oblivious algorithms, convolution quadrature, wave equations, 
boundary integral equations, retarded potentials, contour integral methods.
\end{keywords}

\begin{AMS}
65R20, 65L06, 65M15, 65M38
\end{AMS}

\section{Introduction}
Certain wave problems exhibit the property that behind the wave front
travelling at a finite speed there exists a smooth tail.  The simplest
examples of this phenomenon are the scalar wave equation in even
dimensions or damped wave equation in any dimension. Recently the
interest in the numerical solution of scalar and vector linear
wave equations by means of time-domain boundary integral equations has
risen sharply
\cite{stas_maxwell,BanKfast,DavD13,DavD14,LalS,LopS,LopS2,CheMWW,SauV13,SauV14,
Say13,MR3092588,MR3055890}. Two main approaches to the discretization
are time-space Galerkin methods \cite{BamH} and Convolution Quadrature
(CQ) \cite{Lub94}. 

The difficulty that arises in applying time-domain boundary integral
methods to dissipative or two-dimensional wave equation can best be
appreciated by comparing the free space Green's functions for the two
and three dimensional acoustic wave equation
\[
G_{\text{2D}} (t,x) = \frac{H(t-|x|)}{2\pi\sqrt{t^2-|x|^2}} \qquad
G_{\text{3D}} (t,x) = \frac{\delta(t-|x|)}{4\pi|x|},
\]
where $H(\cdot)$ is the Heaviside function and $\delta(\cdot)$ is the
Dirac delta. Here we see that whereas in three dimensions the support
of the Green's function is on the time-cone $t = |x|$, in two
dimensions the Green's function is non-zero for all $t > |x|$ and the
decay in $t$ is slow. This infinite tail forces an infinite memory on
time-domain boundary integral equation based methods that results in
expensive long-time computations. This will affect any
numerical method based on time-domain integral equations -- in
particular both time-space Galerkin and Convolution Quadrature. The
smoothness of this tail has been used in \cite{Mansur2d} and
\cite{BanG} to speed up computations and more importantly for the
current work, in \cite{BanG} it was noticed that this tail is due to
an operator of parabolic type; the precise meaning of this is
explained in the next section. This fact was used in \cite{BanG}
for purely theoretical purposes whereas in this paper it will be used
to develop a fast algorithm with reduced memory requirements. Our new
algorithm applied to the tail of the linear hyperbolic problems has
essentially the same properties as the algorithms developed in
\cite{LubS,SchLoLu06} for parabolic problems. To be more precise, let
$n_0 > \diam(\Omega)/h$ where $h$ is the time-step and $\Omega$ is the
scatterer; in other words $n_0 > |x|/h$ for all $|x|$ in the
above. Fast CQ algorithms for hyperbolic problems, see~\cite{Ban10},
can compute the first $n_0$ steps in $O(n_0 \log^2 n_0)$ or $O(n_0\log
n_0)$ time and $O(n_0)$ memory and history. Denoting by $N = n-n_0$
the number of time-step after $n_0$, in the following we discuss the
additional cost required for a target accuracy $\veps$.
\begin{itemize}
\item The computational complexity is $O\left(\log\left(\frac 1{\veps}\right)N\log (N)\right)$. 

\item The number of evaluations of the transfer operator $K$ is
reduced from $O(N)$ to $O\left(\log\left(\frac 1{\veps}\right)\log(N)
\right)$.

\item The memory requirements are reduced from $O(N)$ to $O\left(\log\left(\frac 1{\veps}\right) \log(N) \right)$.
\end{itemize}

The structure of the paper is as follows. In the next section we
consider an abstract setting that covers the motivating application
described above. Next, Runge-Kutta based Convolution Quadrature is
introduced. In the main part of the paper, the description and
analysis of an efficient scheme to compute the convolution weights is
described. The paper concludes with a more detailed description of the
time-domain boundary integral method and with the results of numerical
experiments.

\section{The abstract setting}

Let $K(\lambda,d) : U_\delta \times \mathbb{R}_{>0}\rightarrow
\mathbb{C}$ be analytic as a function of  $\lambda$ in the sector
\[
U_\delta = \{ \lambda \in \mathbb{C}\, : \, |\operatorname{Arg} \lambda| < \pi -\delta\},
\qquad \delta \in (0,\pi/2).
\]
and for some $\mu \in \mathbb{R}$ bounded as
\begin{equation}
  \label{eq:bound_sect}
  \left|e^{\lambda d}K(\lambda,d)\right| \leq C(d) |\lambda|^{\mu}, \qquad  \lambda \in U_\delta.
\end{equation}
Note that this in turn implies the standard bound  for hyperbolic operators
\begin{equation}
  \label{eq:bound_nsect}
  |K(\lambda,d)| \leq C(d) |\lambda|^{\mu}, \qquad \text{for }\Re \lambda > 0.
\end{equation}
We are interested in computing convolutions
\begin{equation}
  \label{eq:conv}
  u(t) = \int_0^tk(t-s,d) g(s)ds,
\end{equation}
where $k(t,d)$ is the inverse Laplace transform of $K(\lambda,d)$. If $\mu <-1$,
$k(t,d)$ as a function of $t$ is continuous and the above convolution is a
well-defined continuous function for integrable $g$.  For $\mu \geq -1$, the
convolution is defined directly via the inverse Laplace transform
\[
u(t)  = \frac{1}{2\pi\mi}\int_{\sigma+\mi\mathbb{R}} e^{\lambda t} K(\lambda,d) \mathscr{L}g(\lambda) d\lambda,
\]
where $\sigma > 0$ and $\mathscr{L}g (\lambda) = \int_0^\infty
e^{-\lambda t} g(t) dt$ denotes the Laplace transform.  For data $g$
such that its Laplace transform decays sufficiently fast, the inverse
Laplace transform above defines a continuous function, see
\cite{Lub94}.

Our aim is to describe an efficient algorithm for the computation of
(\ref{eq:conv}).  An important aspect of the algorithm is that it
should be effective for a range of $0 < d \leq R$ where $R > 0$ is
given.  So far in the literature, fast algorithms have been considered
in the non-sectorial, hyperbolic, case, i.e., kernels satisfying
(\ref{eq:bound_nsect}), \cite{BanS,Ban10}. In the sectorial,
parabolic, case, i.e., kernels bounded as
\begin{equation}
  \label{eq:bound_sectpure}
  \left|K(\lambda,d)\right| \leq C(d) |\lambda|^{\mu}, \qquad  \lambda \in U_\delta,
\end{equation}
fast algorithms which further allow huge memory savings are available
\cite{LubS,LopLPS,SchLoLu06,LoLuSch08}.  Our kernel is strictly
speaking non-sectorial, but after multiplication with $e^{\lambda d}$
becomes sectorial; this is what was meant by tail being due to a
parabolic operator.  Naturally, this special class of operators
requires its own fast algorithms.

\section{Runge-Kutta convolution quadrature}
Time discertization methods used in this paper, are based on
$A$-stable Runge-Kutta methods \cite{HaWII}. We employ standard
notation for an $s$-stage Runge-Kutta discretization based on the
Butcher tableau described by the matrix $\mathbf{A} =
(a_{ij})_{i,j=1}^s \in \IR^{s\times s}$ and the vectors $\bv =
(b_1,\ldots,b_s)^T \in \IR^s$ and $\cv = (c_1,\ldots,c_s)^T \in
[0,1]^s$. The corresponding stability function is given by
\begin{equation}\label{stabfun}
r(z) = 1+z\bv^T (\mathbf{I}-z \mathbf{A})^{-1} \bone,
\end{equation}
where
\[
\bone = (1,1,\dots,1)^T.
\]
Note that $A$-stability is equivalent to the condition $|r(z)| \leq 1$
for $\real z \leq 0$.  In the following we collect all the assumptions
on the Runge-Kutta method. These are satisfied by, for example, Radau
IIA and Lobatto IIIC families of Runge-Kutta methods.

\begin{assumption}\label{as:RK}
\begin{enumerate}[(a)]
%
\item
\label{as:RK-a}
The Runge-Kutta method is $A$-stable with (classical) order $p\ge 1$ and
stage order $q\leq p$.
\item
\label{as:RK-b}
The stability function satisfies
$|r(\mi y)| < 1$ for all real $y\ne 0$.
\item
\label{as:RK-c}
\label{strongly-A-stable}
$r(\infty)=0.$

\item
\label{as:RK-d} The Runge-Kutta coefficient matrix
$A$ is invertible.
\end{enumerate}
\end{assumption}
Since $r(z)$ is a rational function, the above assumptions imply that
\begin{equation}
  \label{eq:r_decay}
  r(z) = O(z^{-1}), \qquad |z| \rightarrow \infty.
\end{equation}

We define the weight matrices $\Wv_n$ corresponding to the operator $K$ by
\begin{equation}\label{Wn}
  \sum_{n=0}^\infty \Wv_n(d)\z^n = K\left( {\boldsymbol{\Delta}(\zeta) \over h},d\right),
\end{equation}
where
\begin{equation}\label{Delta}
  \boldsymbol{\Delta}(\zeta) = \Bigl(\mathbf{A} + {\z \over 1-\z}\bone \bv^T\Bigr)^{-1}.
\end{equation}
Denoting by $\omegav_n=(\omega_n^1,\dots,\omega_n^s)$ the last row of $\Wv_n$, the
approximation to the convolution integral \eqref{eq:conv} at time
$t_{n+1}=(n+1)h$ is given by
\begin{equation}\label{rk-cq}
 u_{n+1}=   \sum_{j=0}^n \sum_{i=1}^s  \omega_{n-j}^i(d)\, g( t_j + c_ih) =
   \sum_{j=0}^n  \omegav_{n-j}(d)\, \gv_j
\end{equation}
with the column vector $\gv_j = g(t_j+\mathbf{c}h)=\bigl( g(t_j+c_ih) \bigr)_{i=1}^s$.

The convergence order of this approximation has been investigated in
\cite{LubO93} for parabolic problems, i.e., for sectorial
$K(\lambda)$, and in \cite{BanL} and \cite{BanLM} for hyperbolic
problems, i.e., for non-sectorial operators.

With the row vector $\ev_n(z)=(e_n^1(z),\dots,e_n^s(z))$ defined as the last row
of the $s\times s$ matrix $\Ev_n(z)$ given by
\begin{equation}\label{En-def}
  (\Delta(\z)-zI)^{-1} = \sum_{n=0}^\infty \Ev_n(z) \,\z^n ,
\end{equation}
we obtain an integral formula for the weights
\begin{equation}\label{omegaRK-int}
  \omegav_n(d) = {h\over 2\pi \mi}
  \int_\Gamma  K(\lambda,d)\ev_n(h\lambda)\, d\lambda.
\end{equation}
This resepresentation follows from Cauchy's formula and the definition of the weights in \eqref{Wn}, with the integration contour $\Gamma$ chosen so that it
and surrounds the poles of $\ev_n(h\lambda)$. An explicit expressing for $\ev_n$ is  is given by
\begin{equation}\label{en-rk}
\ev_n(z) = r(z)^n \mathbf{q}(z),
\end{equation}
with the row vector $\mathbf{q}(z) =  \bv^T (I-z\mathbf{A})^{-1}$; cf.~\cite[Lemma~2.4]{LubO93}. The $A$-stability assumption implies that the poles of $r(z)$ are all in the right-half plane. Further, due to the decay of the rational function $r(z)$, see \eqref{eq:r_decay}, for $n > \mu+1$, the contour $\Gamma$ can be deformed into the imaginary axis.

For the weight matrices it holds
\begin{equation}\label{Wmatrix-int}
  \Wv_n(d) = {h\over 2\pi \mi}
  \int_\Gamma   K(\lambda,d) \Ev_n(h\lambda)\, d\lambda.
\end{equation}
By \cite[Lemma~2.4]{LubO93},  for $n\ge 1$, $\Ev_n(z)$ is the rank-1 matrix given
by
\begin{equation}\label{En-formula}
\Ev_n(z) = r(z)^{n-1} (I-z\mathbf{A})^{-1} \bone \bv^T (I-z\mathbf{A})^{-1}.
\end{equation}
The Runge-Kutta approximation of the inhomogeneous linear problem
\begin{equation}
  \label{eq:ode}
  y'(t) = \lambda y(t) + g(t), \quad y(0) = 0
\end{equation}
at time $t_{n+1}$ is given by
\begin{equation}
  \label{eq:rkode}
  y_{n+1}(\lambda) = h \sum_{j=0}^n \ev_{n-j}(h\lambda) \gv_j
\end{equation}
and thus the approximation of the convolution integral in~\eqref{rk-cq} can be rewritten as~\cite[Proposition~2.4]{LubO93}
\begin{equation}
  \label{eq:ci-int}
  u_{n+1} = \frac{1}{2\pi\mi} \int_\Gamma K(\lambda,d) y_{n+1}(\lambda) d\lambda.
\end{equation}
In the next section we discuss how to approximate the integral
in~\eqref{omegaRK-int} by an efficient quadrature rule.
It turns out that there exists $s>0$ such that for
$n_0 + s < n < n_0 + s B$ for an offset $n_0$ proportional to $d/h$
the quadrature error decays exponentially in the number of quadrature nodes.
The convergence rate depends on $B>1$, but is independent of $s$.
This is the key observation for the fast algorithm explained in
section~\ref{sec:fastalg}, where we split the sum in~\eqref{eq:rkode}
and use contour quadrature to approximate~\eqref{eq:ci-int}.

\section{Efficient quadrature for the computation of weights}\label{sec:quad}

If $K(\lambda)$ is a sectorial operator, in \cite{LopLPS} it is shown
that the contour in the integral formula for the weights $\omegav_n(d)$
in \eqref{omegaRK-int} can be chosen as the left branch of a hyperbola
with center at the origin and foci on the real axis. In
\cite{SchLoLu06} both hyperbolas and Talbot contours are tested and
shown to work in practice. In the case of kernels considered here, the
contour must not have a real part extending to
$-\infty$. Proposition~\ref{propn:trunc_error} shows that cutting the
hyperbola at a finite real part, commits a small error. For the proof
we will require the following technical lemma.

\begin{lemma}\label{lemma:gamma}
  Let
\[
\gamma(\xi) = \inf_{-\xi \leq \Re z \leq 0} \frac{\log |r(z)|}{\Re z}.
\]

Then $\gamma(\xi) \in (0,1]$ for  $\xi > 0$, it monotonically increases as $\xi \to 0$ and
\[
|r(z)| \leq |e^{\gamma(\xi) z}|,
\]
for all $z$ in the strip  $-\xi \leq \Re z \leq 0$.

Further, there exists $\rho > 0$ such that
 \[
 |r(z)| \leq |e^{2 z}|,
 \]
 for all $z$ in the strip  $0 \leq \Re z \leq \rho$.
\end{lemma}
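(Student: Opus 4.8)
The plan is to establish the three assertions about $\gamma(\xi)$ in turn, relying throughout on the fact that $r$ is a rational function with the stability/decay properties collected in Assumption~\ref{as:RK}, and then to treat the ``further'' claim about the right half-strip separately by a local Taylor argument.

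First I would verify that $\gamma(\xi)$ is well defined and lies in $(0,1]$. The $A$-stability hypothesis gives $|r(z)|\le 1$ for $\Re z\le 0$, hence $\log|r(z)|\le 0$ on the strip $-\xi\le\Re z\le 0$; since $\Re z\le 0$ there as well, each ratio $\log|r(z)|/\Re z$ is nonnegative, so $\gamma(\xi)\ge 0$. The upper bound $\gamma(\xi)\le 1$ follows by letting $z\to 0$ along the negative real axis and using $r(z)=1+z+O(z^2)$ (consistency, order $p\ge 1$), which gives $\log|r(z)|/\Re z = \log r(z)/z \to r'(0)=1$. That $\gamma(\xi)>0$ is the first point that needs care: on the line $\Re z=0$ Assumption~\ref{as:RK-b} gives $|r(\mi y)|<1$ for $y\ne 0$, and $r(\infty)=0$ from Assumption~\ref{as:RK-c}, so $\log|r|$ is strictly negative on $\{\Re z\le 0\}\setminus\{0\}$; near $z=0$ the ratio tends to $1>0$; a compactness argument on the (closed, and after one-point compactification at infinity, compact) strip then shows the infimum is attained and is strictly positive. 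Monotonicity of $\gamma$ as $\xi\to 0$ is immediate because shrinking $\xi$ shrinks the set over which the infimum is taken, so the infimum can only increase. The pointwise bound $|r(z)|\le|e^{\gamma(\xi)z}|$ for $z$ in the strip is just a restatement: for such $z$ with $\Re z<0$ we have $\log|r(z)|\le\gamma(\xi)\Re z = \Re(\gamma(\xi)z) = \log|e^{\gamma(\xi)z}|$ by definition of the infimum, and the case $\Re z=0$ holds since $|r|\le 1=|e^{\gamma(\xi)z}|$ there.

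For the last statement, I would note that $r(z)=1+z+O(z^2)$ near the origin, so $\log r(z) = z + O(z^2)$ and hence $\Re\log r(z) = \log|r(z)| = \Re z + O(|z|^2)$ for small $|z|$. On a thin right half-strip $0\le\Re z\le\rho$ I would like $\log|r(z)|\le 2\Re z$, equivalently $\log|r(z)| - 2\Re z \le 0$. Writing $z = x+\mi y$ with $0\le x\le\rho$: for $|z|$ small this follows from the expansion since $\log|r(z)| - 2\Re z = -x + O(|z|^2) \le -x + C(x^2+y^2)$, which one can make $\le 0$ by taking $\rho$ small if $|y|$ is also bounded; for $|z|$ large, $r(\infty)=0$ forces $\log|r(z)|\to-\infty$ while $2\Re z = 2x$ stays in $[0,2\rho]$, so the inequality holds trivially; the genuinely delicate regime is $x$ small but $y$ of moderate size, where I would use that $|r(\mi y)|\le 1$ (in fact $<1$ for $y\ne 0$) together with continuity of $(x,y)\mapsto\log|r(x+\mi y)|-2x$ down to $x=0$: on $x=0$ this function is $\le 0$, and a uniform-continuity argument on a neighbourhood of the imaginary axis (plus the large-$|z|$ decay to handle non-compactness) yields a $\rho>0$ for which it stays $\le 0$ on the whole half-strip. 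I expect this last case distinction — combining the local quadratic estimate near $0$, the behaviour at $\infty$, and uniformity in between — to be the main obstacle, and the cleanest route is probably to argue by contradiction: if no such $\rho$ existed, there would be a sequence $z_k$ with $\Re z_k\to 0^+$ and $\log|r(z_k)| > 2\Re z_k$; passing to a subsequence (using $r(\infty)=0$ to rule out $|z_k|\to\infty$) we get a limit point $\mi y_*$ with $\log|r(\mi y_*)|\ge 0$, i.e. $|r(\mi y_*)|\ge 1$, forcing $y_*=0$ by Assumption~\ref{as:RK-b}; but near $0$ the expansion $\log|r(z)| = \Re z + O(|z|^2)$ contradicts $\log|r(z_k)| > 2\Re z_k$ for $z_k$ small. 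This closes the argument.
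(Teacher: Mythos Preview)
Your treatment of $\gamma(\xi)\in(0,1]$, its monotonicity, and the bound $|r(z)|\le|e^{\gamma(\xi)z}|$ is correct and follows the paper's line: the paper is terser but uses the same ingredients, namely $|r(z)|<1$ for $\Re z<0$ together with $r(\infty)=0$ to force $\gamma(\xi)>0$, and the approximation property $r(z)=e^z+O(z^{p+1})$ to cap $\gamma(\xi)$ at $1$.

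The gap is in your contradiction argument for the right half-strip bound. Having reduced to $z_k\to 0$ with $\Re z_k>0$ and $\log|r(z_k)|>2\Re z_k$, you invoke $\log|r(z)|=\Re z+O(|z|^2)$ and declare a contradiction. But that expansion only yields $C|z_k|^2>\Re z_k$, which is \emph{not} impossible: take $z_k=k^{-3}+\mi k^{-1}$, where $|z_k|^2\sim k^{-2}\gg k^{-3}=\Re z_k$. Sequences approaching $0$ tangentially to the imaginary axis are not excluded by the Taylor expansion alone. A clean fix is to bring in the $x$-derivative: with $\psi(x,y)=\log|r(x+\mi y)|-2x$ you already have $\psi(0,y_k)\le 0<\psi(x_k,y_k)$, so the mean value theorem gives $\xi_k\in(0,x_k)$ with $\partial_x\psi(\xi_k,y_k)>0$, i.e.\ $\Re\bigl(r'/r\bigr)(\xi_k+\mi y_k)>2$; letting $k\to\infty$ and using continuity of $r'/r$ near the origin yields $\Re\bigl(r'(0)/r(0)\bigr)=1\ge 2$, which is the contradiction you need. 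The paper does not spell this part out either; it simply says the proof is analogous, with $\rho$ chosen smaller than the real part of any pole of $r$, and refers to Lemma~1 in \cite{LopLPS}.
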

\begin{proof}
By assumption $|r(z)| < 1$ for all $\Re z < 0$ and $r(\infty) = 0$,
hence $\gamma(\xi) >0$. Further, $\gamma(\xi)$ cannot be greater than
$1$, since this would contradict the approximation property
\[
r(z) = e^z + O(z^{p+1}).
\]
The bound on $r(z)$ is clear by the definition of $\gamma(\xi)$.

The remaining statement can be proved similarly, making sure that
$\rho$ is less than the real part of any singularity of $r(z)$; for a
similar statement see Lemma~1 in \cite{LopLPS}.
\end{proof}

\begin{remark}
Some numerically obtained values of $\gamma$ are given in
Table~\ref{tab:gamma}. For backward Euler $\gamma(\xi)$ is given explicitly by
\[
\gamma(\xi) = \frac{\log(1+\xi)}{\xi}.
\]
In order to reduce the number of constants in
the estimates, we have chosen not to be as precise about the bound for
the case $\Re z > 0$ . The optimality of the estimates has
nevertheless not been significantly affected.
\end{remark}

\begin{table}
  \centering
  \begin{tabular}{|c|c|c|c|}
    \hline
    & Backward Euler & 2-stage Radau IIA & 3-stage Radau IIA\\
    \hline
    $\xi = 1$ & 0.69 & 0.90 & 0.94\\
    $\xi = 1/2$ & 0.811 & 0.984 & 0.997\\
    \hline
  \end{tabular}
  \caption{\small Numerically obtained values of $\gamma(\xi)$ from Lemma~\ref{lemma:gamma}
    for different values of $\xi$.}
  \label{tab:gamma}
\end{table}

With this we have that the integrand in \eqref{omegaRK-int} can be bounded as
\begin{equation}
  \label{eq:exp_decay}
  |K(\lambda,d)r(\lambda h)^n| \leq C(d) |\lambda|^\mu |e^{-\lambda d/n}r(\lambda h)|^n
  \leq C(d) |\lambda|^\mu e^{\Re \lambda t_n(\gamma(\xi)-d/t_n)}
\end{equation}
and thus it decreases exponentially with $-\Re \lambda t_n$ as long as
$0 < -\Re \lambda h < \xi$ and $d/t_n <\gamma(\xi)$. This suggests
replacing contour $\Gamma$ in \eqref{omegaRK-int} with a finite
section of a hyperbola:
\begin{equation}
  \label{eq:hyperbola_finite}
  \Gamma_0 = \nu \varphi([-a,a]), \qquad \varphi(x) = 1-\sin(\alpha-\mi x), \quad \nu > 0.
\end{equation}
We want the endpoint of the finite hyperbola to be in the left-half complex plane, i.e.,
\[
\Re \varphi(a) = (1-\sin\alpha\cosh a) < 0 \quad \iff \quad \cosh a > 1/\sin\alpha.
\]
The right-most point on the hyperbola is given by
\begin{equation}
  \label{eq:rightmost}
  \nu \sup_{x \in [-a,a]} \Re \varphi(x) = \nu \varphi(0) = \nu(1-\sin\alpha) < \nu.
\end{equation}

The error commited in replacing $\Gamma$ with $\Gamma_0$ is investigated next.

\begin{proposition}\label{propn:trunc_error}
  Let $d, \delta>0$ and $\mu$ be given such that
  $K(\lambda,d)$ satisfies
  \eqref{eq:bound_sect} and let $a$ and $\alpha \in (0,\pi/2-\delta)$ be
  given such that $\cosh a > 1/\sin \alpha$.  Then for $h$, $\nu_0>0$,
  $m > \mu$, $0 < \nu \leq \nu_0$ and $t_{n-m} > d/\gamma(\xi)$ with
  $\xi = h \nu_0 |\Re \varphi(a)|$,
  \[
  \left\lVert\omegav_n(d) - \frac{h}{2\pi\mi}\int_{\Gamma_0} K(\lambda,d) \ev_n(\lambda h)  \, d\lambda\right\rVert
  \leq C  |\nu \varphi(a)|^{\mu-m} h^{-m} e^{\nu \Re \varphi(a)(\gamma(\xi) t_{n-m}-d)},
  \]
  where  $C = \text{const} \cdot C(d)$, {with $C(d)$ in \eqref{eq:bound_sect}}.
\end{proposition}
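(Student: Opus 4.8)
The plan is to establish the estimate by a single contour deformation. Start from the integral representation \eqref{omegaRK-int} for $\omegav_n(d)$ with $\Gamma$ the imaginary axis — legitimate here because $n>m>\mu$, so that by \eqref{en-rk} and \eqref{eq:r_decay} the integrand $K(\lambda,d)\ev_n(h\lambda)$ decays like $|\lambda|^{\mu-n}$ at infinity in the closed left half-plane — and then push $\Gamma$ to the left onto
\[
\Gamma'' = \{\,\nu\varphi(-a)+\mi t : t\le 0\,\}\cup\Gamma_0\cup\{\,\nu\varphi(a)+\mi t : t\ge 0\,\},
\]
i.e.\ the truncated hyperbola $\Gamma_0$ of \eqref{eq:hyperbola_finite} closed off by the two vertical rays running upward from $\nu\varphi(a)$ and downward from $\nu\varphi(-a)=\overline{\nu\varphi(a)}$. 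These closing rays have constant real part $\nu\Re\varphi(a)<0$ — this is exactly where $\cosh a>1/\sin\alpha$ is used — so they do \emph{not} run off to $\Re\lambda=-\infty$. That feature is essential: under the merely sectorial bound \eqref{eq:bound_sect} the factor $|e^{-\lambda d}|$ in $|K(\lambda,d)|$ grows exponentially as $\Re\lambda\to-\infty$, so, unlike in the purely parabolic case \eqref{eq:bound_sectpure}, the integral over the \emph{full} hyperbola diverges and the ``truncation error'' must be produced directly. Once the deformation is justified, the quantity to be estimated is precisely $\tfrac{h}{2\pi\mi}$ times the integral of $K(\lambda,d)\ev_n(h\lambda)$ over the two rays, and by the reflection symmetry $K(\bar\lambda,d)=\overline{K(\lambda,d)}$, $\ev_n(\bar z)=\overline{\ev_n(z)}$ it suffices to bound the contribution of the upper ray and double it.

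Next I would justify the passage from $\Gamma$ to $\Gamma''$ by Cauchy's theorem. The integrand is analytic on the bounded ``lens'' swept out between the imaginary axis and $\Gamma''$: $K(\lambda,d)$ is analytic on $U_\delta$, and the hypothesis $\alpha\in(0,\pi/2-\delta)$ keeps both $\Gamma''$ (whose points have argument at most the asymptotic value $\pi/2+\alpha<\pi-\delta$) and that lens inside $U_\delta$; the rational matrix $\ev_n(h\lambda)=r(h\lambda)^n\bv^T(I-h\lambda\mathbf{A})^{-1}$ has poles only at $\lambda=1/(h\eta)$ with $\eta$ an eigenvalue of $\mathbf{A}$, hence in the open right half-plane by $A$-stability, and these lie to the right of $\Gamma''$ provided $\nu_0$ is small enough that the right-most point $\nu\varphi(0)=\nu(1-\sin\alpha)$ of $\Gamma_0$ stays left of them. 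The closing arcs at infinity contribute nothing, the integrand being $O(|\lambda|^{\mu-n})$ with $n>\mu$.

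Then comes the estimate on the upper ray $\lambda=\nu\varphi(a)+\mi t$, $t\ge 0$, where $\Re\lambda=\nu\Re\varphi(a)$ is constant and $|\lambda|\ge|\nu\varphi(a)|$. I would bound the three factors of $|K(\lambda,d)\ev_n(h\lambda)|=|K(\lambda,d)|\,|r(h\lambda)|^n\,\lVert\bv^T(I-h\lambda\mathbf{A})^{-1}\rVert$ separately. First, \eqref{eq:bound_sect} gives $|K(\lambda,d)|\le C(d)|\lambda|^\mu e^{-d\nu\Re\varphi(a)}$, and the factor $e^{-d\nu\Re\varphi(a)}$ is exactly the $e^{-\nu\Re\varphi(a)d}$ in the claimed bound. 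Second, split $n=(n-m)+m$: since $\xi=h\nu_0|\Re\varphi(a)|$ and $0<\nu\le\nu_0$, the point $h\lambda$ lies in the strip $-\xi\le\Re(h\lambda)<0$, so Lemma~\ref{lemma:gamma} yields $|r(h\lambda)|^{n-m}\le e^{\gamma(\xi)(n-m)h\Re\lambda}=e^{\gamma(\xi)t_{n-m}\nu\Re\varphi(a)}$, the genuinely small factor, while the remaining $|r(h\lambda)|^m\,\lVert\bv^T(I-h\lambda\mathbf{A})^{-1}\rVert$ is bounded algebraically by a constant times $h^{-m}|\lambda|^{-m}$ using the decay \eqref{eq:r_decay} of $r$ and the invertibility of $\mathbf{A}$. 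Multiplying, the integrand is at most $C\,C(d)\,h^{-m}|\lambda|^{\mu-m}e^{\nu\Re\varphi(a)(\gamma(\xi)t_{n-m}-d)}$; the exponential is constant along the ray and is pulled out, and integrating the remaining algebraic factor in $t$, using $|\lambda|\ge|\nu\varphi(a)|$ together with one further power of decay to secure convergence (a routine point), produces the prefactor $|\nu\varphi(a)|^{\mu-m}$. Adding the conjugate contribution of the lower ray and multiplying by $h/(2\pi)$ gives the stated estimate.

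The main obstacle is conceptual rather than computational: recognising that the contour must be truncated at finite real part and closed with real-part-bounded rays, so that the error is not the tail of a convergent integral but an object to be estimated from scratch. On the closing rays the two exponential effects must be disentangled: \eqref{eq:bound_sect} contributes the harmless-looking but potentially large factor $e^{-d\Re\lambda}=e^{d\nu|\Re\varphi(a)|}$, and only the strip bound of Lemma~\ref{lemma:gamma} applied to $r^{n-m}$ supplies the compensating $e^{\gamma(\xi)t_{n-m}\nu\Re\varphi(a)}$, so the hypothesis $t_{n-m}>d/\gamma(\xi)$ is precisely what makes the product — and hence the whole bound — genuinely small. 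The bookkeeping of the powers of $h$ and $|\lambda|$ and the verification that the deformation crosses no pole of $r$ are the only remaining routine matters.
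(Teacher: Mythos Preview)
Your approach is essentially the paper's own: deform the contour in \eqref{omegaRK-int} to $\Gamma_{-1}\cup\Gamma_0\cup\Gamma_1$ with $\Gamma_{\pm1}$ the vertical half-lines through $\nu\varphi(a)$ and its conjugate, then bound the integrals over $\Gamma_{\pm1}$ by combining \eqref{eq:bound_sect}, the strip estimate $|r(h\lambda)|^{n-m}\le e^{\gamma(\xi)t_{n-m}\Re\lambda}$ from Lemma~\ref{lemma:gamma}, and the $O((h\lambda)^{-1})$ decay of $r$ and of $(I-h\lambda\mathbf{A})^{-1}$ for the remaining $m+1$ powers needed for integrability. Your write-up is in fact more explicit than the paper's about justifying the deformation (analyticity in $U_\delta$, location of the poles of $r(h\lambda)$ relative to the rightmost point $\nu(1-\sin\alpha)$ of $\Gamma_0$), which is a welcome addition rather than a deviation.
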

\begin{proof}
Let us choose the contour in \eqref{omegaRK-int} as $\Gamma =
\Gamma_{-1}+\Gamma_0+\Gamma_{1}$, where $\Gamma_0$ is defined by
\eqref{eq:hyperbola_finite}, $\Gamma_{-1} = \{ \bar w-\mi \varrho
\,|\, \varrho \in [0,\infty)\}$ and $\Gamma_{1} = \{ w+\mi \varrho
\,|\, \varrho \in [0,\infty)\}$, where $w = \nu \varphi(a)$; see
Figure~\ref{fig:contours}.

To prove the required result we need to bound
\[
\frac{h}{2\pi\mi}\int_{\Gamma_{\pm 1}} K(\lambda,d) \ev_n(\lambda h)  \, d\lambda.
\]

\begin{figure}
  \centering
  \includegraphics[width=.4\textwidth]{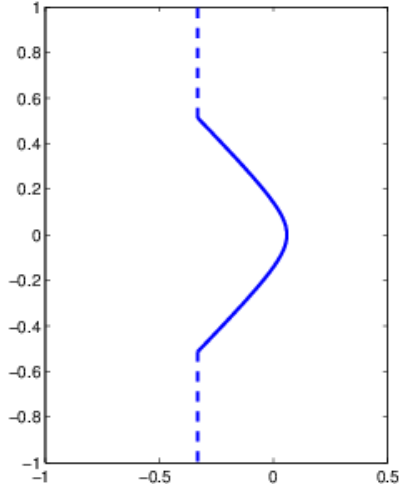}
  \caption{\small Contour $\Gamma_0$ is depicted by the solid line and  $\Gamma_{-1}$ and $\Gamma_1$ by the dashed lines.
  \label{fig:contours}}
\end{figure}

From the definition of $\Gamma_{\pm 1}$, \eqref{eq:exp_decay},
property \eqref{eq:bound_sect} of $K(\lambda)$ and the fact that $r(z)
= O(z^{-1})$ it follows that for $\lambda \in \Gamma_{-1} {\cup}
\Gamma_1$
\begin{equation}
  \label{eq:bound_rho1_c}
  \begin{split}
h  |K(\lambda,d)r(\lambda h)^n| &\leq C h^{1-m}|\lambda|^{\mu-m}e^{\Re\lambda(\gamma t_{n-m}-d)}\\
&= C h^{1-m}|\lambda|^{\mu-m}e^{\nu\Re \varphi(a)(\gamma t_{n-m}-d)}.
  \end{split}
\end{equation}

Hence
\[
\begin{split}
\lefteqn{h\int_{\Gamma_{\pm1}}\left\lVert K(\lambda, d)
r(\lambda h)^n \bv^T(I-\lambda h \mathbf{A}v)^{-1}\right\rVert}& \\
&\leq C h^{1-m} e^{\nu \Re \varphi(a)(\gamma t_{n-m}-d)} \int_{\Gamma_{\pm1}} |\lambda|^{\mu-m}\|(I-\lambda h\mathbf{A})^{-1}\|d\lambda \\
& \leq Ch^{-m} e^{\nu \Re \varphi(a)(\gamma t_{n-m}-d)}  \int_{\Gamma_{\pm1}} |\lambda|^{\mu-m-1} d\lambda \\
& \leq C |\nu \varphi(a)|^{\mu-m} h^{-m} e^{\nu \Re \varphi(a)(\gamma t_{n-m}-d)}.
\end{split}
\]
finishes the proof.
\end{proof}

The next step is to define a quadrature on the interval $[-a,a]$ and
hence on the contour $\Gamma_0$.  Since the integrand is small at the
edges of the interval, trapezoidal quadrature is a good choice and the
following result gives the quadrature error.
\begin{proposition}\label{propn:quad_error}
 Let $d, \delta>0$ and $\mu$ be given such that $K(\lambda,d)$
satisfies \eqref{eq:bound_sect}.  Let $\alpha \in (0,\pi/2-\delta)$
and $a>0$ be such that $\cosh a > 1/\sin \alpha$ and $0< b <
\min(\alpha, \pi/2-(\delta+\alpha))$. Assume that $h$ and $\nu_0>0$
are small enough so that $h \nu_0 (1-\sin(\alpha - b)) < \rho$ with
$\rho$ as in Lemma~\ref{lemma:gamma}.  For $L\in \mathbb{N}$ and $\tau
= a/L$ let
\begin{equation}\label{eq:xi}
\xi = h \nu_0 \sup_{y \in [-b,b]}|\Re \varphi(a+\tau/2+\mi y)|
 = -h \nu_0 (1-\sin(\alpha+b)\cosh (a+\tau/2)).
\end{equation}
Then for
\[
\mathbf{f}(x) = \frac{{\nu}h}{2\pi\mi}K(\nu\varphi(x))\ev_n(\nu\varphi(x)h) \varphi'(x)
\]
and
\[
\mathbf{I} = \int_{-a}^a \mathbf{f}(x) dx, \qquad \mathbf{I}_L = \frac{a}{L}{\sum_{k = -L}^L}\mathbf{f}(x_k),
\]
where $x_k = k\tau$, and any $0 < \nu < \nu_0$ it holds that
\[
\lVert \mathbf{I}-\mathbf{I}_L\rVert\leq C \left( h\frac{e^{2 t_n \nu} } {e^{2\pi b/\tau}-1}+
 \left(h\nu \cosh(a+\tau/2)\right)^{-m}e^{\nu \real \vfi(a+\tau/2 - \mi b)(\gamma(\xi) t_{n-m}-d)}\,\tau \right),
\]
for $m$ the smallest integer with  $m > \mu$, and
$$
C=C(d,\sin(\alpha+b)) \nu^{1+\mu} \max\{1,\left(\cosh(a+\tau/2)\right)^{1+\mu})\}.
$$

\end{proposition}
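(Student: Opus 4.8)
The plan is to estimate the trapezoidal-rule error for the vector-valued function $\mathbf{f}(x)$ on $[-a,a]$ by the standard contour-shifting argument for analytic integrands. First I would identify the strip of analyticity: since $K(\lambda,d)$ is analytic in the sector $U_\delta$, and $\varphi(x)=1-\sin(\alpha-\mi x)$ maps a horizontal strip $|\Im x|<b$ (with $b<\min(\alpha,\pi/2-(\delta+\alpha))$) into that sector, the composition $\mathbf{f}(x)$ extends analytically to the strip $\{|\Im x|<b\}$. The choice $b<\alpha$ keeps $\Re\varphi$ controlled and the choice $b<\pi/2-(\delta+\alpha)$ ensures $\operatorname{Arg}\varphi(x+\mi y)$ stays bounded away from $\pm\pi$ so we remain inside $U_\delta$. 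Then I would invoke the classical periodized-Cauchy / Poisson-summation bound for the trapezoidal rule: for a function analytic and decaying in a strip of half-width $b$, the error $\|\mathbf{I}-\mathbf{I}_L\|$ is bounded by a geometric factor $(e^{2\pi b/\tau}-1)^{-1}$ times the sum of the maxima of $\|\mathbf{f}\|$ on the two lines $\Im x=\pm b$, plus endpoint contributions coming from the fact that we integrate over the finite interval $[-a,a]$ rather than all of $\bR$.

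Next I would split the resulting bound into the two advertised terms. The first term, $h\,e^{2t_n\nu}/(e^{2\pi b/\tau}-1)$, comes from bounding $\|\mathbf{f}\|$ on the horizontal edges $\Im x=\pm b$ using \eqref{eq:r_decay}, the crude estimate $|r(z)|\le |e^{2z}|$ from the last part of Lemma~\ref{lemma:gamma} valid for $0\le\Re z\le\rho$ — this is why the hypothesis $h\nu_0(1-\sin(\alpha-b))<\rho$ is imposed, since on $\Im x=\pm b$ the real part of $\nu\varphi$ can be as large as $\nu(1-\sin(\alpha-b))$ — together with $|\nu\varphi(x\pm\mi b)h|^\mu$ absorbed into the constant $C$ via the $\max\{1,(\cosh(a+\tau/2))^{1+\mu}\}$ factor. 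The second term is the endpoint contribution: near $x=\pm a$ one shifts only partway, landing on the segment $\Re x=a+\tau/2$, $|\Im x|\le b$, and there one uses the \emph{sectorial} decay, i.e., the bound \eqref{eq:exp_decay} that combines \eqref{eq:bound_sect} with Lemma~\ref{lemma:gamma} applied with the value $\xi$ defined in \eqref{eq:xi}, to extract the factor $e^{\nu\Re\varphi(a+\tau/2-\mi b)(\gamma(\xi)t_{n-m}-d)}$ and the algebraic factor $(h\nu\cosh(a+\tau/2))^{-m}$, with the extra $\tau$ accounting for the short length of the endpoint pieces. Replacing $r(\lambda h)^n$ by $r(\lambda h)^{n-m}\cdot r(\lambda h)^m$ and using $r(z)=O(z^{-1})$ on the $m$ spare factors is what produces the negative power $-m$, exactly as in the proof of Proposition~\ref{propn:trunc_error}.

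Finally I would collect the contour-integral pieces with the correct signs and verify that the geometric decay rate $2\pi b/\tau = 2\pi L b/a$ indeed grows linearly in $L$, giving the exponential convergence in the number of quadrature nodes claimed in the text, with a rate depending on $b$ (hence on $\delta$ and $\alpha$) but not on $n$ or $s$.

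The main obstacle I expect is the careful bookkeeping at the endpoints: one must track how far into the strip one is allowed to shift near $x=\pm a$ without leaving $U_\delta$ (this is where the precise definition of $\xi$ in \eqref{eq:xi} as a supremum over $y\in[-b,b]$ of $|\Re\varphi(a+\tau/2+\mi y)|$ enters), and one must verify that on that endpoint segment the argument $d/t_{n-m}<\gamma(\xi)$ condition is what keeps the exponential exponent from having the wrong sign — so that the second term genuinely decays rather than blows up. Making the constant $C$ come out in the stated form, with the $\max\{1,(\cosh(a+\tau/2))^{1+\mu}\}$ factor handling both signs of $\mu$ uniformly, is the part requiring the most care, but it is otherwise a routine (if tedious) combination of Lemma~\ref{lemma:gamma}, the bound \eqref{eq:bound_sect}, and $r(z)=O(z^{-1})$, exactly along the lines of \cite{LopLPS}.
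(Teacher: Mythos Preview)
Your proposal is correct and follows essentially the same route as the paper's proof: invoke the finite-interval trapezoidal error bound (stated as Lemma~\ref{lemma:trapez} in the Appendix), bound $\lVert\mathbf f\rVert$ on the rectangle $R_\tau$ via the second part of Lemma~\ref{lemma:gamma} to get the $he^{2t_n\nu}/(e^{2\pi b/\tau}-1)$ term, and bound $\lVert\mathbf f(\pm a\pm\tau/2+\mi y)\rVert$ via the sectorial estimate, exactly as in Proposition~\ref{propn:trunc_error}, splitting the cases $\mu\le-1$ and $\mu>-1$ to obtain the stated form of the constant $C$.
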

\begin{proof}
  Let us first suppose that $\mathbf{f}$ is analytic and bounded as
  $\lVert \mathbf{f}(z)\rVert \leq M$ for $z \in R_\tau = \{w \in \mathbb{C} : -a-\tau \leq \Re w \leq a+\tau$, $-b < \Im w < b\}$. Then it follows from Lemma~\ref{lemma:trapez} in the Appendix that
\begin{equation}\label{genbound}
\left\lVert\mathbf{I}-\mathbf{I}_L\right\rVert \leq\frac{2M}{e^{2\pi b/\tau}-1} +
\frac{\log 2}{\pi} \tau \sup_{y \in [-b,b]}\lVert\mathbf{f}(a+\tau/2+\mi y)-\mathbf{f}(-a-\tau/2+\mi y)\rVert.
\end{equation}
To finish the proof we need to first bound $\mathbf{f}$ in the rectangles $R_{\tau}$ and $R_{\tau/2}$, in a similar fashion as in \cite{LoPa}.

By the assumptions on $K$, for $b$ such that $0 < b <
\min(\alpha,\pi/2-(\delta+\alpha))$, the integrand is analytic in the
rectangle $R_{\tau}$. For any $x,y\in \bR$
$$
|\vfi(x+\mi y)| = \cosh x - \sin(\alpha+y) \quad \mbox{and} \quad
|\vfi'(x+\mi y)| = \sqrt{\cosh^2 x - \sin^2(\alpha+y)},
$$
leading to the following estimates for $z = x+\mi y \in R_{\tau}$
\begin{equation}\label{bounds_vfi}
  \begin{split}
    1-\sin(\alpha+b)  &\le |\vfi(x+\mi y)| \le \cosh (a+\tau) - \sin(\alpha-b), \\
    |\vfi'(x+\mi y)| &\le\sqrt{\cosh^2 (a+\tau) - \sin^2(\alpha-b)},\\
    \displaystyle \left|\frac{\vfi'(x+\mi y)}{\vfi(x+ \mi y)}\right| &\le
    \sqrt{\frac{1+\sin(\alpha+b)}{1-\sin(\alpha+b)}}.
  \end{split}
\end{equation}

Thus, if $\mu \le -1$ in \eqref{eq:bound_nsect}, by using the second
part of Lemma~\ref{lemma:gamma} we can bound the integrand for $z\in
R_{\tau}$ (and thus the constant $M$ in \eqref{genbound}) as
\[
\lVert|\mathbf{f}(z)\rVert
\leq \frac{h}{2\pi} C(d) \left(1+\sin(\alpha+b)\right)^{1/2} \left(1-\sin(\alpha+b) \right)^{1/2+\mu} \nu^{1+\mu} e^{2\nu t_n}.
\]
Using now that $\real\vfi(\pm a \pm \tau/2 +\mi y) < 0$ we can estimate as in Proposition~\ref{propn:trunc_error}, with $m=0$,
\begin{align*}
  \lVert\mathbf{f}(\pm a\pm \tau/2+\mi y)\rVert
  \leq  \frac{1}{2\pi}  C(d) & \left(1+\sin(\alpha+b)\right)^{1/2} \left(1-\sin(\alpha+b) \right)^{1/2+\mu} \cdot \\
  & \cdot \nu^{1+\mu} e^{\nu t_{n}\real \vfi(a+\tau/2 -\mi b) \left(\gamma(\xi)-d/t_{n}\right)}.
\end{align*}

For $\mu > -1$, we obtain instead
\[
\lVert\mathbf{f}(z) \rVert \leq  \frac{h}{2\pi} C(d) \sqrt{\frac{1+\sin(\alpha+b)}{1-\sin(\alpha+b)}} \nu^{1+\mu} \left(\cosh(a+\tau/2)\right)^{1+\mu}  e^{2\nu t_n}
\]
and
\begin{align*}
\lVert\mathbf{f}(\pm a\pm \tau/2+\mi y)\rVert
 \leq \frac{h^{-m}}{2\pi} C(d) & \sqrt{\frac{1+\sin(\alpha+b)}{1-\sin(\alpha+b)}} \cdot \\
 \cdot \left(\nu\cosh(a+\tau/2) \right)^{1+\mu-m} & e^{\nu t_{n-m }\real \vfi(a+\tau/2 -\mi b) \left(\gamma(\xi)-d/t_{n-m}\right)}.
\end{align*}
This finishes the proof.
\end{proof}
Combining the two propositions gives the following theorem.
\begin{theorem}\label{thm:main}
Under the conditions of Proposition~\ref{propn:quad_error} and with the same definition of $\mathbf{I}_L$
\begin{equation}\label{mainerr}
  \begin{split}
   &\lVert \omegav_n(d) -\mathbf{I}_L\rVert \leq \\
   & C \left(h\frac{e^{2 t_n
          \nu}}{e^{2\pi b/\tau}-1}+ (1+\tau)\left(h\nu \cosh a
      \right)^{-m} e^{\nu \left(1-\sin (\alpha-b) \cosh
          a\right)(\gamma(\xi) t_{n-m}-d) } \right),
  \end{split}
\end{equation}
with $m = \lceil \mu\rceil$ and
$$
C=C(d,\sin(\alpha+b)) \nu^{1+\mu} \max\{1,\left(\cosh(a+\tau/2)\right)^{1+\mu})\}.
$$
\end{theorem}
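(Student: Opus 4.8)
The plan is to combine Proposition~\ref{propn:trunc_error} and Proposition~\ref{propn:quad_error} by the triangle inequality, writing
\[
\lVert \omegav_n(d) - \mathbf{I}_L\rVert \leq \lVert \omegav_n(d) - \tfrac{h}{2\pi\mi}\textstyle\int_{\Gamma_0} K(\lambda,d)\ev_n(\lambda h)\,d\lambda\rVert + \lVert \mathbf{I} - \mathbf{I}_L\rVert,
\]
since $\mathbf{I} = \int_{-a}^a \mathbf{f}(x)\,dx$ is precisely the parametrized form of $\tfrac{h}{2\pi\mi}\int_{\Gamma_0} K(\lambda,d)\ev_n(\lambda h)\,d\lambda$ under the substitution $\lambda = \nu\varphi(x)$ (note $\mathbf{f}$ absorbs the Jacobian $\varphi'(x)$ and the factor $\nu$). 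The first term is bounded by Proposition~\ref{propn:trunc_error} and the second by Proposition~\ref{propn:quad_error}; it remains to check that the hypotheses of both propositions are simultaneously met and to merge the resulting bounds into the stated form.

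First I would verify hypothesis compatibility. The conditions of Proposition~\ref{propn:quad_error} are assumed outright, so I need the conditions of Proposition~\ref{propn:trunc_error} to follow. The constraint $t_{n-m} > d/\gamma(\xi)$ there is exactly what makes the exponent $\gamma(\xi)t_{n-m} - d$ positive (times a negative real part of $\varphi(a)$), so I would note that the second term on the right of \eqref{mainerr} is meaningful only in that regime and the bound is otherwise trivially true (or one restricts to that regime). The choice $m = \lceil \mu \rceil$ is consistent: Proposition~\ref{propn:trunc_error} requires $m > \mu$ with $m$ an integer, and Proposition~\ref{propn:quad_error} takes $m$ to be the smallest such integer, i.e.\ $\lceil \mu \rceil$ when $\mu \notin \mathbb{Z}$; a small remark covers the edge case $\mu \in \mathbb{Z}$.

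Next I would reconcile the two exponential factors and the algebraic prefactors. Proposition~\ref{propn:trunc_error} contributes $C|\nu\varphi(a)|^{\mu-m}h^{-m}e^{\nu\Re\varphi(a)(\gamma(\xi)t_{n-m}-d)}$, while the second summand of Proposition~\ref{propn:quad_error}'s estimate has the form $C(h\nu\cosh(a+\tau/2))^{-m}e^{\nu\Re\varphi(a+\tau/2-\mi b)(\gamma(\xi)t_{n-m}-d)}\,\tau$. Using $|\varphi(a)| = \cosh a - \sin\alpha \asymp \cosh a$ and, via \eqref{bounds_vfi}, that $\cosh(a+\tau/2)$ and $\cosh a$ differ by a bounded factor absorbed into the constant, one sees both "tail" contributions are dominated by a single term of the shape $(1+\tau)(h\nu\cosh a)^{-m}e^{\nu(1-\sin(\alpha-b)\cosh a)(\gamma(\xi)t_{n-m}-d)}$; here the key point is that $\Re\varphi(a+\tau/2-\mi b) = 1 - \sin(\alpha-b)\cosh(a+\tau/2) \le 1-\sin(\alpha-b)\cosh a < \Re\varphi(a) < 0$, so this exponent is the \emph{largest} (least negative) and hence gives the dominant bound; the $(1+\tau)$ factor subsumes both the lone $\tau$ from the quadrature endpoint term and the $O(1)$ from the truncation term. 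The first summand $Ch\,e^{2t_n\nu}/(e^{2\pi b/\tau}-1)$ of Proposition~\ref{propn:quad_error} is already in final form and is carried over verbatim. The constant $C$ in \eqref{mainerr} is declared identical to the one in Proposition~\ref{propn:quad_error}, so absorbing the Proposition~\ref{propn:trunc_error} constant (which is $\mathrm{const}\cdot C(d)$) into it is routine.

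The main obstacle I anticipate is purely bookkeeping rather than conceptual: tracking how the various $\cosh(a+\tau)$, $\cosh(a+\tau/2)$, $\cosh a$ and $\sin(\alpha\pm b)$ factors from \eqref{bounds_vfi} and from the two propositions collapse into the single clean constant $C(d,\sin(\alpha+b))\,\nu^{1+\mu}\max\{1,(\cosh(a+\tau/2))^{1+\mu}\}$ and the single exponential with argument $1-\sin(\alpha-b)\cosh a$. One has to be careful that replacing $\cosh(a+\tau/2)$ by $\cosh a$ inside an exponent (as opposed to inside an algebraic prefactor) is only legitimate because it makes the exponent larger, which is the correct direction for an upper bound, and because the difference $\cosh(a+\tau/2)-\cosh a = O(\tau)$ together with the factor $\nu(\gamma(\xi)t_{n-m}-d)$ could in principle be large — so strictly one should keep $\cosh(a+\tau/2)$ in the exponent or note that $\tau = a/L$ is small; I would follow the paper's evident convention of sacrificing a little sharpness (as flagged in the Remark after Lemma~\ref{lemma:gamma}) to get the stated form. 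Everything else is a direct concatenation of the two preceding results.
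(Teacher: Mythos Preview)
Your approach is exactly the one the paper takes: the paper simply states ``Combining the two propositions gives the following theorem'' with no further argument, so the proof is precisely the triangle-inequality split you describe. Your detailed reconciliation of the exponents and prefactors (in particular the observation that $\Re\varphi(a+\tau/2-\mi b) \le \Re\varphi(a) < 0$ gives the dominant bound, and that the $(1+\tau)$ factor absorbs both tail contributions) is more than the paper itself spells out.
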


\subsection{A non-optimal choice of parameters}\label{sec:parameter_choice1}

We will first show that a good choice of the parameters exists resulting in an efficient algorithm. The optimal choice of parameters is discussed later.

To set the stage let us deal with the first term in the error estimate of Theorem~4.5 in a way that is standard for oblivious quadrature algorithms. Here time is split into ever increasing intervals. A novelty is is that we require the first interval to start at some $n_0h > d$.

Therefore, let $t_n =nh \in
\left[hn_0+ hB^\ell,hn_0 + h 2B^{\ell+1} \right]$, $\ell \geq 0$,  and denote by $T_\ell$ the right-end point of this interval, i.e.,
$
T_\ell = hn_0 + h 2B^{\ell+1}.
$
Choose $\nu_\ell = \frac{c_0}{T_\ell}$ and $a_\ell = c_1 \log T_\ell$ for some constants $B > 1$, $c_0 > 0$, $c_1 > 0$. For $\tau$ small enough, i.e., $L = a/\tau$ big enough, the first term can be made arbitrarily small. In fact  for the first term to be smaller than $\varepsilon_1$ we need
\[
L \propto \log T_\ell \log \frac1\varepsilon_1.
\]

To simplify the details of the analysis of the second term, let us assume $m = 0$ in Theorem~4.5 and let $\xi$ be given by the formula
\[
\xi(\nu_{\ell} ,a_{\ell} ) = h \nu_{\ell} |\Re \varphi(a_{\ell} )| = h \nu_{\ell}  (\cosh a_{\ell}  \sin \alpha -1)
\sim h,
\]
where in the last step we used $\nu_{\ell}  \sim \frac1{T_\ell}$ and $a_{\ell}  \sim \log T_\ell$.
In fact $\xi$ is given by a somewhat more complicated formula, but the inclusion of all the details would not change the asymptotic results we give here; for an optimal choice of parameters these details are of importance.
We can thus write the second term in the estimate of Theorem~4.5 as
\[
\varepsilon_2 = e^{-\frac{\xi(\nu_{\ell} ,a_{\ell} )}{ h}(\gamma(\xi(\nu_{\ell} ,a_{\ell} )) t_n-d) }
\sim e^{-(\gamma(h) t_n-d) }.
\]
Therefore as $L$ is increased we expect the error to decrease exponentially fast until it reaches $\varepsilon_2$; see Figure~\ref{fig:convergenceConttw}. Note that if we make $n_0$ large enough, this error can also be controlled.  For later intervals, i.e., for larger $\ell$, the second term quickly becomes insignificantly small.

\subsection{The fast and oblivious algorithm}
\label{sec:fastalg}
The algorithm here is similar to the fast and oblivious algorithm
described in~\cite{SchLoLu06}, but with a shift by $ n_0 = \lceil d/(h\gamma(\xi(\nu_0,a_0))) \rceil $ as explained above.

For $N_\ell$ the smallest integer such that
$n < n_0+1+B+\sum_{\ell=2}^{N_\ell}B^\ell$ the convolution~\eqref{rk-cq}
is split into $N_\ell$ sums
\begin{equation}
\label{eq:timesplit}
u_{n+1} = u_{n+1}^{(0)} + \sum_{\ell = 2}^{N_\ell} u_{n+1}^{(\ell)}
\end{equation}
where for suitable $b_\ell$ given below
$$
u_{n+1}^{(0)} = \sum_{j=b_1}^n \omegav_{n-j} \gv_j \mbox{ and }
u_{n+1}^{(\ell)} = \sum_{j=b_\ell}^{b_{\ell-1}-1} \omegav_{n-j} \gv_j.
$$
In view of the discussion in Section~\ref{sec:parameter_choice1} the $b_\ell$ are chosen such that
for $j = b_\ell, b_\ell+1,  \dots, b_{\ell-1}-1$ we have $t_{n-j}=(n-j)h \in
\left[hn_0+ hB^\ell,hn_0 + h 2B^{\ell+1} \right]$, where $n_0$ is a fixed integer offset with
$n_0\ge d/(h \gamma(\xi))$. Inserting \eqref{omegaRK-int} and using \eqref{en-rk} we
obtain
\begin{align}
  u_{n+1}^{(\ell)} &= \sum_{j=b_\ell}^{b_{\ell-1}-1} {h\over 2\pi \mi}
  \int_\Gamma K(\lambda,d)\mathbf{e}_{n-j}(h\lambda)\, d\lambda\, \gv_j\label{eq:ull}
  \\
  &= \frac{1}{2\pi\mi} \int_\Gamma r(h\lambda)^{n-(b_{\ell-1}-1)} K(\lambda,d) y^{(\ell)}(h\lambda) \, d\lambda
\end{align}
with $y^{(\ell)}(\lambda;b_{\ell-1},b_\ell)$, similar to
\eqref{eq:rkode}, the Runge-Kutta approximation to \eqref{eq:ode} at
time $t=hb_{\ell-1}$ with initial value $y^{(\ell)}(hb_\ell) = 0$.
The contour integral $\int_\Gamma$ is approximated by the
$\ell$-dependent approximation given in Proposition
\ref{propn:quad_error}. Instead of keeping all the history, i.e.\ $\gv_j$
for $j=0,\dots,n$ in memory, for evaluating the convolution the
algorithm requires only three copies of the Runge-Kutta solution
$y^{(\ell)}(\lambda;b_\ell,b_{\ell-1})$ for $\ell=2\dots N_\ell$ and
each $\lambda^{(\ell)}_k = \nu_\ell \vfi(\tau_\ell)$, which are
calculated step by step. Details can be found in~\cite{SchLoLu06}.  As
$N_\ell$ is proportional to $\log(n-n_0)$, the memory requirement thus
grows like $\mathcal{O}(\log(n-n_0))$ and the operation count as
$\mathcal{O}((n-n_0)\log(n-n_0))$. The computation of $u_{n+1}^{(0)}$ is done with standard CQ algorithms  with the number of evaluations of the kernel and memory requirements  proportional to $n_0$.

\subsection{Optimal choice of parameters}
We begin with a corollary of Theorem~\ref{thm:main}.
\begin{corollary}\label{coro:para}
Assume that $t_n \in [t_0,\Lambda t_0]$, for given $t_0>0$ and $\Lambda \ge 1$, and that there exists $0<D<1$ such that $d \leq D \gamma(\xi)t_0$.
Then for every $\theta \in (0,1)$, the following choice of parameters
$$
\tau = \frac{a(\theta)}{L}  \qquad \nu = \frac{\pi b L \theta}{\Lambda t_0 a(\theta)}.
$$
with
$$
a(\theta)=\arccosh \left(  \frac{\gamma(\xi)(1-D)\theta + 2\Lambda(1-\theta)}{\gamma(\xi)(1-D) \theta \sin(\alpha-b)} \right)
$$
yields the uniform error estimate
$$
|\omegav_n(d) -\mathbf{I}_L| \leq  C \exp\left(-\frac{2\pi b L (1-\theta)}{a(\theta)}\right),
$$
where $C$ includes all non exponentially growing terms in the bound \eqref{mainerr}.
\end{corollary}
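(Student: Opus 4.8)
The goal is to extract, from the two-term bound in Theorem~\ref{thm:main}, a single exponentially decaying estimate that is \emph{uniform} over the window $t_n \in [t_0,\Lambda t_0]$. The strategy is the standard ``balancing'' argument for hyperbola quadrature (as in \cite{LopLPS,LoPa,SchLoLu06}): introduce a free parameter $\theta \in (0,1)$ that splits the available exponential budget between the two error terms, choose $a$ and $\nu$ so that each term, after the split, decays like $\exp(-2\pi b L (1-\theta)/a(\theta))$, and then simplify using the hypotheses $d \le D\gamma(\xi)t_0$ and $t_n \le \Lambda t_0$.

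**Key steps.** First I would recall the bound \eqref{mainerr} and, discarding the algebraically-growing prefactors (which go into $C$), focus on the two exponentials $h\,e^{2t_n\nu}/(e^{2\pi b/\tau}-1)$ and $e^{\nu(1-\sin(\alpha-b)\cosh a)(\gamma(\xi)t_{n-m}-d)}$. With $m=0$ (or by absorbing the $t_{n-m}$ vs.\ $t_n$ discrepancy into constants for bounded $m$) and $\tau = a/L$, the first term is essentially $e^{2t_n\nu - 2\pi bL/a}$ and the second, using $t_{n-m}\ge t_0$ and $d \le D\gamma(\xi)t_0$, is bounded by $e^{-\nu(\sin(\alpha-b)\cosh a - 1)\gamma(\xi)(1-D)t_0}$. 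Second, I would impose that both exponents be $\le -2\pi bL(1-\theta)/a(\theta)$: from the first term this forces $2t_n\nu \le 2\pi b L\theta/a$, and since $t_n \le \Lambda t_0$ it suffices to take $\nu = \pi b L\theta/(\Lambda t_0 a(\theta))$, which is exactly the stated choice. Third, substituting this $\nu$ into the requirement that the second exponent be at most $-2\pi bL(1-\theta)/a$ gives, after cancelling $\pi b L$, an inequality of the form $(\sin(\alpha-b)\cosh a - 1)\gamma(\xi)(1-D)t_0 \ge 2\Lambda t_0(1-\theta)/\theta \cdot$(constant), i.e.\ a lower bound on $\cosh a$; solving it for equality yields precisely
\[
\cosh a(\theta) = \frac{\gamma(\xi)(1-D)\theta + 2\Lambda(1-\theta)}{\gamma(\xi)(1-D)\theta \sin(\alpha-b)},
\]
which is the displayed $a(\theta)$. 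Finally, with these choices both exponential terms are $\le \exp(-2\pi bL(1-\theta)/a(\theta))$ uniformly in $t_n \in [t_0,\Lambda t_0]$, and collecting the polynomial prefactors of \eqref{mainerr} into $C$ finishes the proof.

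**Main obstacle.** The delicate point is the bookkeeping around $\xi$ and around the offset between $t_n$ and $t_{n-m}$. The quantity $\xi = \xi(\nu,a)$ in Proposition~\ref{propn:quad_error} depends on $\nu$ and $a$ through $\cosh(a+\tau/2)$, so $\gamma(\xi)$ appearing in the definition of $a(\theta)$ is in principle implicitly defined; one must check that treating $\gamma(\xi)$ as an essentially fixed quantity (using its monotonicity from Lemma~\ref{lemma:gamma} and the fact that $\xi$ stays in a controlled range) does not invalidate the argument, or else absorb the resulting slack into $D$ and the constant. Likewise, replacing $t_{n-m}$ by $t_0$ and $t_n$ by $\Lambda t_0$ must be done consistently so that the two exponents genuinely share the bound $-2\pi bL(1-\theta)/a(\theta)$; this is routine but is where a careless constant would break uniformity. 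Everything else is algebraic manipulation of the exponents and repackaging of prefactors into $C$.
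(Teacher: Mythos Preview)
Your proposal is correct and follows essentially the same approach as the paper: the paper's proof consists of precisely the two balancing equations $2\Lambda\nu t_0 = \theta\cdot 2\pi b/\tau$ and $(\theta-1)\,2\pi b/\tau = \gamma\nu t_0(1-\sin(\alpha-b)\cosh a)$ (with $D$ in place of $d$), which are exactly the equalities you derive in your second and third steps. Your identification of the $\gamma(\xi)$ circularity as the main obstacle is also on target; the paper acknowledges the same issue in the remark immediately following the corollary and resolves it pragmatically by fixing $\gamma(\xi)$ a priori.
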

The above choice of parameters is independent of $h$.
\begin{proof}
The stated choice for $\tau$ and $\nu$ guarantees that exponents in the bound \eqref{mainerr} with $D$ in place of $d$ are equal and negative, with
$$
2\Lambda \nu t_0 = \theta \frac{2\pi b}{\tau}, \quad \theta \in(0,1)
$$
and
$$
(\theta-1) \frac{2\pi b}{\tau}= \gamma \nu t_0 \left(1-\sin (\alpha-b) \cosh a\right).
$$
\end{proof}
\begin{remark}
Notice that $\gamma(\xi)$ and $a(\theta)$ are nonlinearly related via
\eqref{eq:xi}. In our experiments we have opted to fix the value of
$\gamma(\xi)\in(0,1)$ and then use the error estimate in
Corollary~\ref{coro:para} to compute $\theta$, $a(\theta)$ and
$\tau(\theta)$. This strategy may lead to an underestimation of the
stability function according to Lemma~\ref{lemma:gamma}. Still, our
numerical results show that reasonable values of $\gamma$ and good
choices for the rest of parameters are easy to find for prescribed
accuracies. These parameters depend on $\delta, \mu$ and $d$ in
\eqref{eq:bound_sect} but not on the particular application of our
method. Results for different values of $\gamma(\xi)$ are displayed in
Figures~\ref{fig:evolutionerror} and \ref{fig:convergenceCont}, for a
particular example.
\end{remark}

The effect of round-off errors can be included in the analysis in the
same way as in \cite{LoPaScha}, leading to a minimization problem for
the choice of $\theta$. In the simplest case of analysis in
\cite{LoPaScha}, the propagation of the errors in the evaluation of
$K$, that we denote by $\varepsilon$, is governed by the exponentially
growing term
$$
\varepsilon e^{\Lambda \sigma} = \varepsilon \epsilon(\theta)^{-\theta/2}, \quad \mbox{ with }\quad \epsilon(\theta)= e^{-2\pi b L/a(\theta)}.
$$
Then from Corollary~\ref{coro:para} we deduce that the total error estimate is of the form
\begin{equation}\label{bound}
\varepsilon \epsilon(\theta)^{-\theta/2} + \epsilon(\theta)^{1-\theta}.
\end{equation}
The choice $\theta=1/L$ above guarantees the boundedness of the term in $\varepsilon$ and the control of the error propagation, giving a convergence rate like $O(e^{-cL/\ln L})$. A better choice of $\theta$ can be obtained by minimizing \eqref{bound} for given $L$, $\alpha$, $b$, and $\Lambda$.

\subsection{A numerical experiment}
We illustrate Theorem~\ref{thm:main} and Corollary~\ref{coro:para} by
considering in \eqref{eq:ull} the generating function
\[
K(\lambda,d) = K_0(\lambda d)
\]
where $K_0(\cdot)$ is a modified Bessel function \cite{AbrS}.  This
function satisfies the bounds (\ref{eq:bound_nsect}) and
(\ref{eq:bound_sect}) with $\mu = -1/2$ as proved in \cite{BanG}. In
Figures~\ref{fig:scalar_dbig} and \ref{fig:scalar_dsmall} we show the
error in the approximation of the convolution weights in \eqref{eq:ull}
along approximation intervals of the form $[hn_0+hB^{\ell},hn_0+2hB^{\ell+1}]$, with
$B=5$ and $B=10$ and for two different values of the distance parameter $d$,
namely $d=0.1$ (Fig. \ref{fig:scalar_dbig}) and $d=0.01$ (Fig. \ref{fig:scalar_dsmall}).

The lower row of error curves corresponds to the case $B=5$, where we
take $L=15$ quadrature nodes on the hyperbola and consider seven
approximation intervals, i.e. $\ell = 2,\dots,8$.  The upper row of
error curves, with the larger error corresponds to $B=10$, $L=10$ and
$\ell = 2,\dots,6$. The other parameters that determine the hyperbola
are as follows $\alpha = 0.9$, $\gamma(\xi)=0.8$, $b = 0.6$ and $n_0 =
\lceil d/\gamma(\xi)/h \rceil$.
\begin{figure}
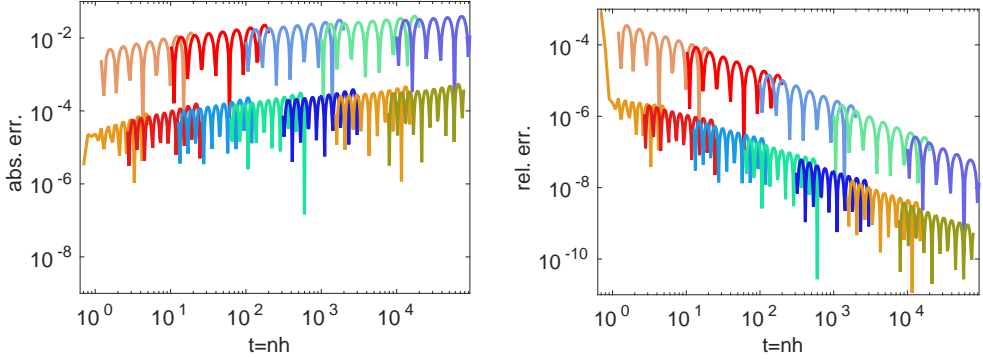

  \centering
  \includegraphics[width=.48\textwidth]{ANewHyperberrN10N15_dis_RKVRadauIIA5besselk0_d0_1}\hfill
  \includegraphics[width=.48\textwidth]{NewHyperberrN10N15_dis_RKVRadauIIA5besselk0_d0_1}
  \caption{\small Absolute (left) and relative (right) errors in the computation
    of weights $\omega_n(d)$ for the 3 stage Radau IIA method of order 5
    for $d = 0.1$, number of quadrature points $L = 10$ and basis $B = 10$ (top row)
    and $L=15$ and $B=5$ (bottom row).
  }
  \label{fig:scalar_dbig}
\end{figure}
\begin{figure}
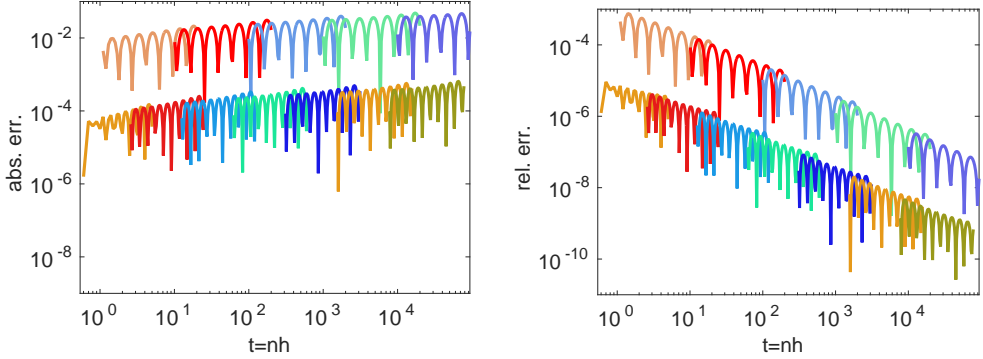

  \centering
  \includegraphics[width=.48\textwidth]{ANewHyperberrN10N15_dis_RKVRadauIIA5besselk0_d0_01}\hfill
  \includegraphics[width=.48\textwidth]{NewHyperberrN10N15_dis_RKVRadauIIA5besselk0_d0_01}
  \caption{\small Absolute (left) and relative (right) errors in the computation
    of weights $\omega_n(d)$ for the 3 stage Radau IIA method of order 5
    for $d = 0.01$, number of quadrature points $L = 10$ and basis $B = 10$ (top row)
    and $L=15$ and $B=5$ (bottom row).}
  \label{fig:scalar_dsmall}
\end{figure}

\section{An application}

As explained in the introduction, the initial motivation for
investigating operators satisfying (\ref{eq:bound_sect}) comes from
the application of time-domain boundary integral operators for wave
propagation in cases where the strong Huygens' principle does not
hold.  Most common examples of the latter are propagation of acoustic
waves in two dimensions or in a dissipative medium and propagation of
viscoelastic waves.  Here, we will give a brief introduction to
time-domain boundary integral equations -- for more background information see
\cite{costabel04}.

Let $\Omega \subset \mathbb{R}^d$, $d = 2,3$,  be a bounded Lipschitz domain with boundary $\Gamma = \partial \Omega$ and let $\Omega^c = \mathbb{R}^d \setminus \overline{\Omega}$ be its complement.   Let $u$ be a causal solution of the dissipative wave equation in $\mathbb{R}^d \setminus \Gamma$
\begin{equation}\label{eq:wave_eqn}
\begin{aligned}
  \partial_t^2 u(t,x) + \alpha \partial_t u(t,x)- \Delta u(t,x) &= 0, & t &\in [0,T], x \in \mathbb{R}^d \setminus \Gamma \\
u(t,x) &= g(t,x), & t &\in [0,T], x \in \Gamma \\
   u(0,x) = \partial_t u(0,x) &= 0, &  x &\in \mathbb{R}^d \setminus \Gamma,
\end{aligned}
\end{equation}
where $\alpha \geq 0$ is a constant and $g(t,x)$ is a given boundary data.

The solution of (\ref{eq:wave_eqn}) can be represented as a {\em single layer potential}
\begin{equation}
  \label{eq:slp}
  u(t,x) = \int_0^t \int_\Gamma k(t-\tau,|x-y|) \phi(\tau,y) d\Gamma_y d\tau,
\end{equation}
where the boundary density $\phi$ is the unique solution of the boundary integral equation, see  \cite{Lub94}: Find $\phi$ such that
\begin{equation}
  \label{eq:slp_eq}
  g(t,x) = \int_0^t \int_\Gamma k(t-\tau,|x-y|) \phi(\tau,y) d\Gamma_y d\tau,
\quad \text{for all } (t,x) \in [0,T] \times \Gamma.
\end{equation}

Explicit formulas for the kernel function are complicated, see \cite{costabel04}, and even unavailable in the literature for two dimensions and $\alpha > 0$.  Nevertheless, the Laplace transform of the kernel function
\[
K(\lambda,d) = \left(\mathscr{L} k\right) (\lambda,d) = \int_0^\infty e^{-\lambda t} k(t,d) dt,
\]
the so-called transfer function, is easily written down explicitly:
\begin{equation}
  \label{eq:Ks_wave}
K(\lambda,d)=
\begin{cases}
\frac1{2\pi}K_0\left(\sqrt{\lambda^2+\alpha \lambda}d\right) & \text{in two dimensions}\\
\frac{e^{-\sqrt{\lambda^2+\alpha \lambda}d}}{4\pi d} &  \text{in three dimensions},
\end{cases}
\end{equation}
where $K_0(\cdot)$ is a modified Bessel function.  The transfer function satisfies the bounds (\ref{eq:bound_nsect}) and (\ref{eq:bound_sect}) as first noticed in \cite{BanG}.

\begin{lemma}\label{lemma:wave_kern}
  For a fixed $d > 0$, the function $K(s,d)$ given in (\ref{eq:Ks_wave}) satisfies   (\ref{eq:bound_sect}) with
\[
\mu =
\begin{cases}
  -1/2 & \text{in two dimensions}\\
 0  &  \text{in three dimensions}.
\end{cases}
\]
\end{lemma}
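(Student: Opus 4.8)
The plan is to verify the bound \eqref{eq:bound_sect}, namely $|e^{\lambda d} K(\lambda,d)| \le C(d) |\lambda|^\mu$ on the sector $U_\delta$, separately in the three- and two-dimensional cases, reducing everything to the behaviour of the auxiliary function $w(\lambda) = \sqrt{\lambda^2 + \alpha\lambda}$ on $U_\delta$. First I would establish that $w$ maps $U_\delta$ into a sector of the form $U_{\delta'}$ with $\delta' > 0$ depending on $\delta$ (and that for $\alpha = 0$ one simply has $w(\lambda) = \lambda$), and that $\Re w(\lambda) \ge c|\lambda|$ for $|\lambda|$ large and $\Re w(\lambda) \ge 0$ throughout; the key elementary fact is that $w(\lambda)/\lambda = \sqrt{1 + \alpha/\lambda} \to 1$ as $|\lambda| \to \infty$ uniformly on $U_\delta$, so $w(\lambda) = \lambda(1 + O(1/|\lambda|))$ and in particular $|w(\lambda)| \asymp |\lambda|$ for $|\lambda|$ bounded away from $0$, while near $\lambda = 0$ one has $w(\lambda) \sim \sqrt{\alpha\lambda}$, which still lies in a sector and has positive real part.

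In three dimensions the claim is essentially immediate: $e^{\lambda d} K(\lambda,d) = \frac{1}{4\pi d} e^{(\lambda - w(\lambda))d}$, and since $\lambda - w(\lambda) = -\alpha d/(2) \cdot (1 + o(1)) \cdot \lambda/\lambda \to -\alpha/2$... more carefully, $\lambda - w(\lambda) = \lambda(1 - \sqrt{1+\alpha/\lambda}) \to -\alpha/2$ as $|\lambda|\to\infty$, and has nonpositive real part on all of $U_\delta$ bounded away from $0$, hence $|e^{(\lambda - w(\lambda))d}|$ stays bounded; combined with continuity near $\lambda = 0$ this gives $|e^{\lambda d}K(\lambda,d)| \le C(d)$, i.e. $\mu = 0$. (For $\alpha = 0$ the quantity $e^{\lambda d}K = \frac{1}{4\pi d}$ is simply constant.) In two dimensions I would write $e^{\lambda d}K(\lambda,d) = \frac{1}{2\pi} e^{\lambda d} K_0(w(\lambda) d)$ and invoke the standard asymptotics of the modified Bessel function: $K_0(z) = \sqrt{\pi/(2z)}\, e^{-z}(1 + O(1/z))$ as $|z|\to\infty$ with $|\operatorname{Arg} z| < \pi$, and $K_0(z) = -\log(z/2) - \gamma_E + o(1)$ as $z \to 0$. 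Since $w(\lambda)d$ ranges over a sector strictly inside $|\operatorname{Arg}| < \pi$, the large-argument asymptotics apply; they give $e^{\lambda d}K_0(w(\lambda)d) \sim \text{const}\cdot (w(\lambda)d)^{-1/2} e^{(\lambda - w(\lambda))d}$, and as in 3D the exponential factor is bounded while $|w(\lambda)|^{-1/2} \asymp |\lambda|^{-1/2}$, yielding $\mu = -1/2$. Near $\lambda = 0$, $K_0(w(\lambda)d)$ grows only logarithmically, which is dominated by any negative power of $|\lambda|$, so the bound $C(d)|\lambda|^{-1/2}$ holds there too; I would also note $\alpha\in(0,\pi/2-\delta)$ is irrelevant here since in this lemma $\alpha \ge 0$ is the damping constant, and the sectoriality of $K$ only needs $\alpha$ small relative to $\pi$, which is automatic.

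The main obstacle is the uniform control of $\operatorname{Arg} w(\lambda)$ and of $\Re(\lambda - w(\lambda))$ over the \emph{whole} sector $U_\delta$ — in particular ensuring that $w(\lambda)d$ never approaches the negative real axis where $K_0$ has its branch cut, and that $\Re(\lambda - w(\lambda)) \le 0$ (or at least bounded above) uniformly, including in the transition region where $|\lambda|$ is comparable to $\alpha$. I would handle this by splitting $U_\delta$ into $\{|\lambda| \le R\}$ and $\{|\lambda| > R\}$ for a suitable $R = R(\alpha,\delta)$: on the bounded piece everything is continuous and $e^{\lambda d}K_0(w(\lambda)d)$ extends continuously (with the log singularity at $0$ absorbed into $|\lambda|^{-1/2}$), hence bounded; on the unbounded piece the Bessel asymptotics and the expansion $w(\lambda) = \lambda + \alpha/2 + O(1/|\lambda|)$ make the estimates explicit, the decisive point being $\Re(\lambda - w(\lambda)) = -\alpha/2 + O(1/|\lambda|) \le 0$ for $R$ large. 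This is precisely the computation carried out in \cite{BanG}, so I would cite it for the routine parts and only record the conclusion.
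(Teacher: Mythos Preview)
Your proposal is correct and follows essentially the same route as the paper: the paper also writes $e^{\lambda d}K_0(w(\lambda)d) = e^{(\lambda - w(\lambda))d}\cdot e^{w(\lambda)d}K_0(w(\lambda)d)$ with $w(\lambda)=\sqrt{\lambda^2+\alpha\lambda}$, bounds the first factor by the three-dimensional result and the second by the two-dimensional $\alpha=0$ result (i.e., the Bessel asymptotics you invoke), citing \cite{BanG} for those special cases. The only difference is packaging---the paper reduces the remaining 2D, $\alpha>0$ case to the two known ones in a single displayed line, whereas you unfold both into the underlying asymptotics of $K_0$ and of $\lambda-w(\lambda)$.
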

\begin{proof}
  The bound (\ref{eq:bound_sect}) is obvious in the 3D case and for the two dimensional case follows from the asymptotic expansions for large arguments of $K_0(z)$, see \cite{AbrS}.  In \cite[Lemma~3.2]{BanG} the proof of the result is given for the two dimensional case and $\alpha =0$ and for three dimensional case and general $\alpha \geq 0$. The remaining case is a consequence of these results as shown next
\begin{align*}
  \left|e^{\lambda d} K_0(\sqrt{\lambda^2+\alpha \lambda}d)\right| &= \left|e^{\lambda d-\sqrt{\lambda^2+\alpha \lambda}d}\right| \,
\left|e^{\sqrt{\lambda^2+\alpha \lambda}d} K_0(\sqrt{\lambda^2+\alpha \lambda}d)\right|\\
&\leq C(\sigma,d) \left|\sqrt{\lambda^2+\alpha \lambda}\right|^{-1/2}\\
&\leq C(\sigma,d) \left|\lambda\right|^{-1/2}.
\end{align*}
\end{proof}

This now allows us to discretize the time convolution in (\ref{eq:slp_eq}) using convolution quadrature:
\begin{equation}
  \label{eq:slp_eq_cq}
  g_n(x) = \sum_{j = 0}^n \int_\Gamma \omegav_{n-j}(|x-y|) \phiv_j(y) d\Gamma_y, \qquad n = 0,1,\dots, T/h = N,
\end{equation}
where the weights $\omegav_{n-j}(|x|)$ are determined from the kernel
function $K(s,|x|)$ and a choice of linear multistep or Runge-Kutta
method in the same way as in the previous sections. To simplify the
description of the method, we will confine ourselves to single stage
RK methods, i.e., the backward Euler method. Numerical results will be
for multistage RK based discretization, details of implementing these
can be found in \cite{Ban10}. Note that we have used the notation
$g_n(x) = g(t_n,x)$ (and $\phi_n(x) \approx \phi(t_n,x)$) above; in
the case of multistage method these would be vectors of size $s$ as
before. The convergence of such an approximation of the integral
operators has been first analyzed in \cite{Lub94} for linear multistep
methods and then in \cite{BanL,BanLM} for Runge-Kutta methods.

To obtain a fully discrete system we need to discretize
(\ref{eq:slp_eq_cq}) in space as well. Here we will make use of a
standard Galerkin boundary element method. In order to do this, let
$\{\Gamma_1,\Gamma_2,\dots,\Gamma_M\}$ with $\cup \overline{\Gamma_j}
= \Gamma$ be a boundary element mesh and let $S_h =
\operatorname{Span}\{b_1,b_2,\dots,b_M\}$ be a subspace of
$H^{-1/2}(\Gamma)$, in particular let it be the space of piece-wise
constant functions with the basis defined by
\[
b_i(x) =
\begin{cases}
  1 \qquad \text{if } x \in \overline{\Gamma_i},\\
  0 \qquad \text{otherwise}.
\end{cases}
\]
Writing (\ref{eq:slp_eq_cq}) in a variational form and discretizing by the Galerkin method we obtain the fully discrete system: Find $\phi_{kj}$ such that
\[
\int_\Gamma g_n(x)b_\ell(x)dx = \sum_{j = 0}^n \int_\Gamma \int_\Gamma \omega_{n-j}(|x-y|) \phi_{kj}b_k(y)b_\ell(x) d\Gamma_y d\Gamma_x,
\]
$n = 0,1,\dots, T/h = N$. It will be convenient for the later discussion to rewrite this system in a matrix notation
\begin{equation}
  \label{eq:full_disc_mat}
  \mathbf{g}_n = \sum_{j = 0}^n \mathbf{W}_{n-j} \boldsymbol{\phi}_j.
\end{equation}

The simplest way of applying the techniques developed in this paper would be to apply oblivious quadrature for times $t_n > \operatorname{diam}(\Omega)/\gamma(\xi)$. A more efficient approach would be to split the matrices $\mathbf{W}_j$ into distance classes. For example given a positive constant $d_1 < \operatorname{diam}(\Omega)$ we could split each matrix into two as follows
\begin{equation}
  \label{eq:splitting}
\left(\mathbf{W}_k^{(1)}\right)_{ij} = \left\{
\begin{array}{ll}
  (\mathbf{W}_{k})_{ij} & \text{if } \operatorname{dist}(\Gamma_i,\Gamma_j) \leq d_1,\\
0 & \text{otherwise}
\end{array}
\right.
\quad \mathbf{W}_k^{(2)} = \mathbf{W}_k - \mathbf{W}_k^{(1)}
\end{equation}
Then the above sum could be split as
\[
\mathbf{g}_n = \sum_{j = 0}^n \mathbf{W}^{(1)}_{n-j} \boldsymbol{\phi}_j
+ \sum_{j = 0}^n \mathbf{W}^{(2)}_{n-j} \boldsymbol{\phi}_j
\]
and the new method applied to the first sum for $t_n > d_1/\gamma(\xi)$ and to the second for $t_n > \operatorname{diam}{\Omega}/\gamma(\xi)$, i.e., this way we can obtain savings earlier for the computation of the first sum.

\begin{remark}
In this example it is important that the quadrature used to compute
the weights $\mathbf{W}_j^{(1)}$ or $\mathbf{W}_j^{(2)}$ is valid for
a range of distances $d$. To compute the relevant parameters we
proceed as explained in Section~\ref{sec:quad} taking $d
= d_1$ or $d = \operatorname{diam}(\Omega)$. As shown in
Corollary~\ref{coro:para}, these parameters are then valid also for
any $\tilde d < d$.
\end{remark}

With this the stage is set for applying the oblivious quadrature
techniques of the previous section to the setting of time-domain
boundary integral equations described above. The algorithm is adapted to solve convolution integrals such as the one in \eqref{eq:slp_eq} in the same way as explained in \cite[Section 4.2]{SchLoLu06}. From \eqref{eq:full_disc_mat} we see that the scheme is implicit in the vector of stages $\phiv_n$. The fast and oblivious algorithm is then applied to deal with the evaluation of the history term in the right hand side of the linear system
$$
\mathbf{W}_{0} \boldsymbol{\phi}_n= \mathbf{g}_n - \sum_{j = 0}^{n-1} \mathbf{W}_{n-j} \boldsymbol{\phi}_j.
$$
Results of numerical
experiments are given in the next section.

\section{Numerical experiments}

\subsection{Wave equation in two dimensions}
Let $\Omega \subset \mathbb{R}^2$ be a disk with radius one.  We
solve~\eqref{eq:wave_eqn} with $\alpha = 0$ and $g(x,t) = t^4
\mathrm{e}^{-2t}$. We integrate from $t=0$ to $T=40$ with step-size $h =
T/400$, i.e. $N=400$, and discretize in space with equally large
patches of size $\approx 2\pi/100$, i.e. $M=100$. Because of the symmetries of the circle, see \cite{SauV14}, the solution is space independent.  It is shown in Figure~\ref{fig:solbem} on the time interval
$t\in[0,40]$.
\begin{figure}
  \centering
  \includegraphics [width=4in]{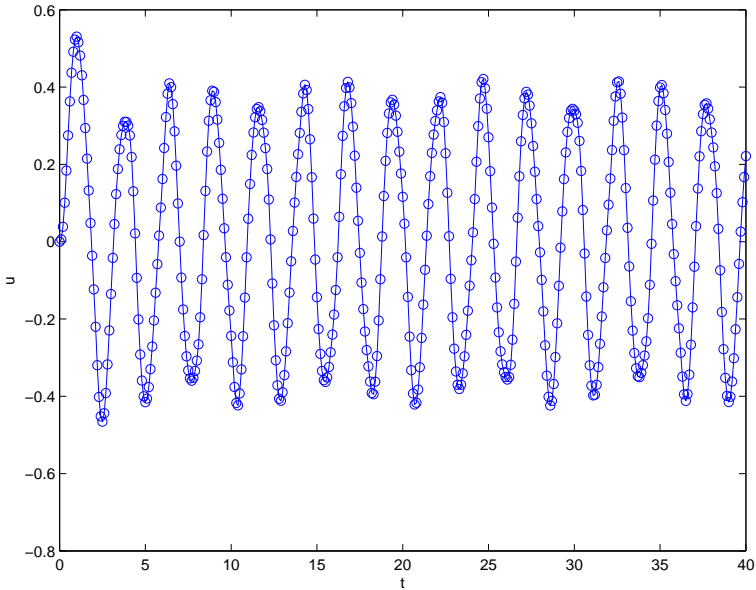}
  \caption{\small Solution of~\eqref{eq:wave_eqn} for $t\in (0,40)$, $x=(0,1)$.}
  \label{fig:solbem}
\end{figure}
The splitting of the weights into two distance classes is done by
setting the parameter $d_1 = \sqrt 2$ in
Equation~\eqref{eq:splitting}, such that the entries of $\mathbf{W}$
are divided into two equally large distance classes.  In all
experiments we chose the basis $B = 5$, which defines the splitting
in~\eqref{eq:timesplit} together with the offset $n_0 = \lceil
d/h/\gamma(\xi)\rceil$, where $d$ depends on the distance class and is either
$2$ or $\sqrt{2}$.


The evolution of the error for contour parameters $\alpha = 0.98$ and $b=0.33$ for different $L$ and $\gamma(\xi)$
is shown in Figure~\ref{fig:evolutionerror}. Increasing $L$ and decreasing $\gamma(\xi)$
yields more accurate results. The error is measured against a reference solution calculated with
a standard CQ algorithm described in \cite{BanS}.
\begin{figure}
  \centering
  \includegraphics [width=4in]{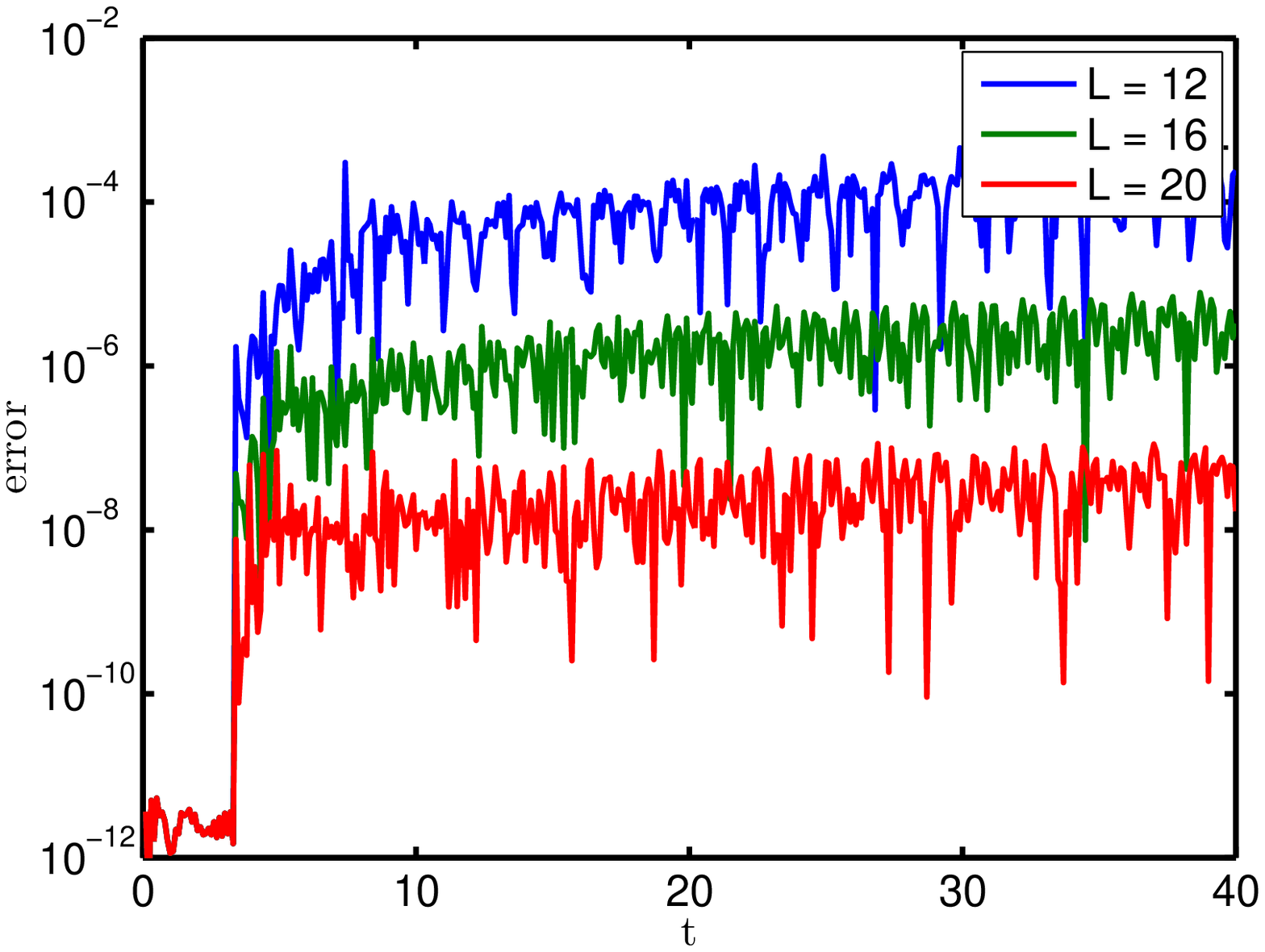}
  \includegraphics [width=4in]{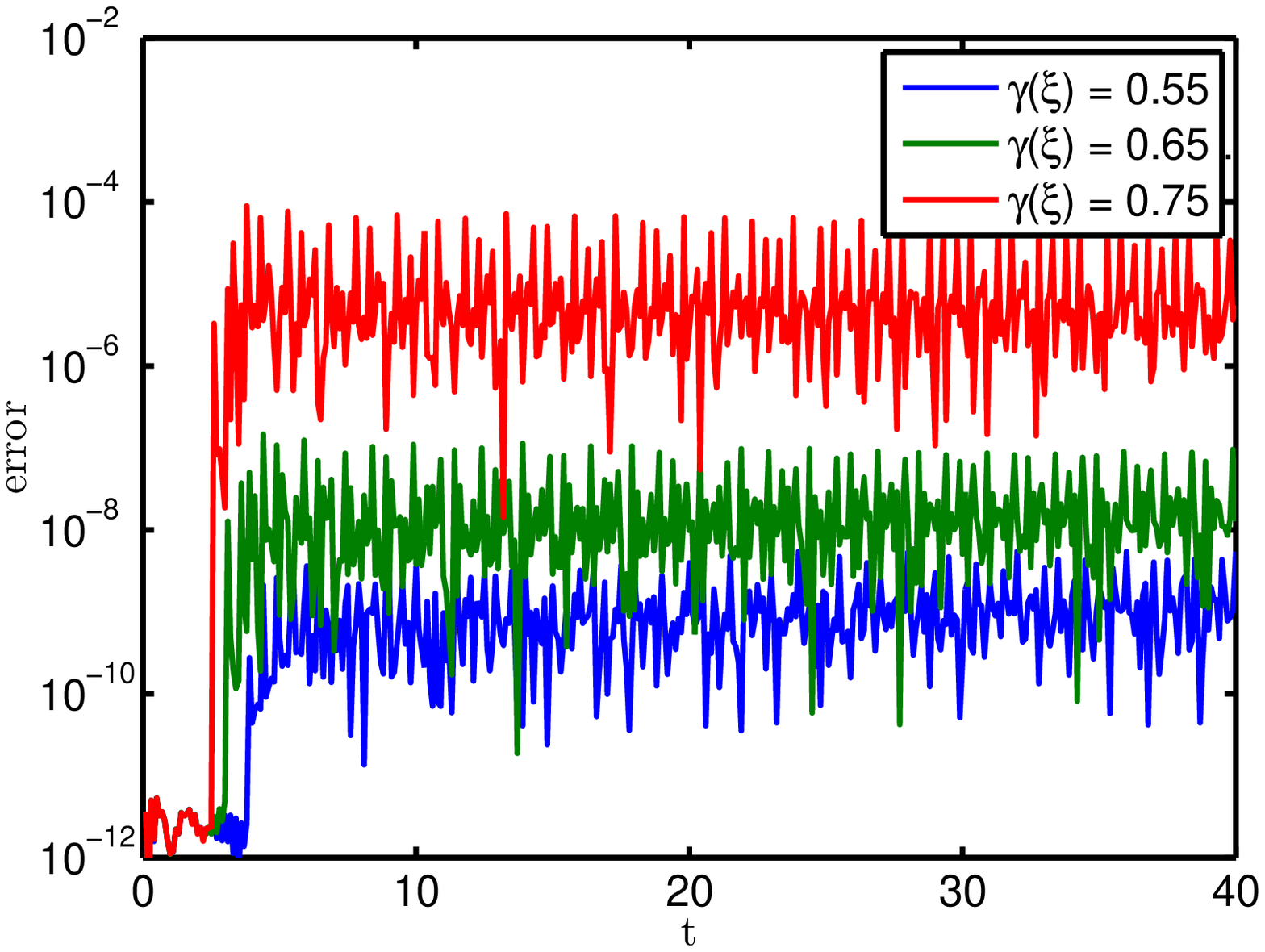}
  \caption{\small Evolution of the error:
    For different $L$ with fixed $\gamma(\xi) = 0.6$ (top) and
    for different $\gamma(\xi)$ and fixed $L=26$ (bottom).}
  \label{fig:evolutionerror}
\end{figure}
Convergence in the number of quadrature nodes $L$ for different $\gamma(\xi)$
is shown in Figure~\ref{fig:convergenceCont}. The error here is measured in the sup norm. Only if $\gamma(\xi)$ is small enough increasing $L$ improves the result. In this case
we observe exponential convergence in $L$ in agreement with Theorem~\ref{thm:main}.
\begin{figure}
  \centering

  \caption{\small Convergence in the number of contour quadrature nodes $L$.}
  \label{fig:convergenceCont}
\end{figure}
The choice of $\alpha$ and $b$ is done experimentally.

\begin{remark}
A better choice of parameters might be feasible including the angle $\alpha$ in the optimization process and eliminating $b$, as it is done in \cite{WeiTre} for the numerical inversion of the Laplace transform. In our example we have tested the parameters from \cite{WeiTre}  on a purely experimental basis and the convergence rates are actually better. However we point out that the theory in \cite{WeiTre} does not apply to our situation. Another issue with the parameters from \cite{WeiTre} is that the propagation of the errors in the evaluations of the Laplace transform $K$ is not under control, as can be observed in Figure~\ref{fig:convergenceConttw} when the error curves reach the accuracy of the reference solution, about $10^{-10}$. A further study of the optimal parameters in the context of 2D and damped wave equations might be the topic of future research.
\begin{figure}
  \centering
  \includegraphics [width=4in]{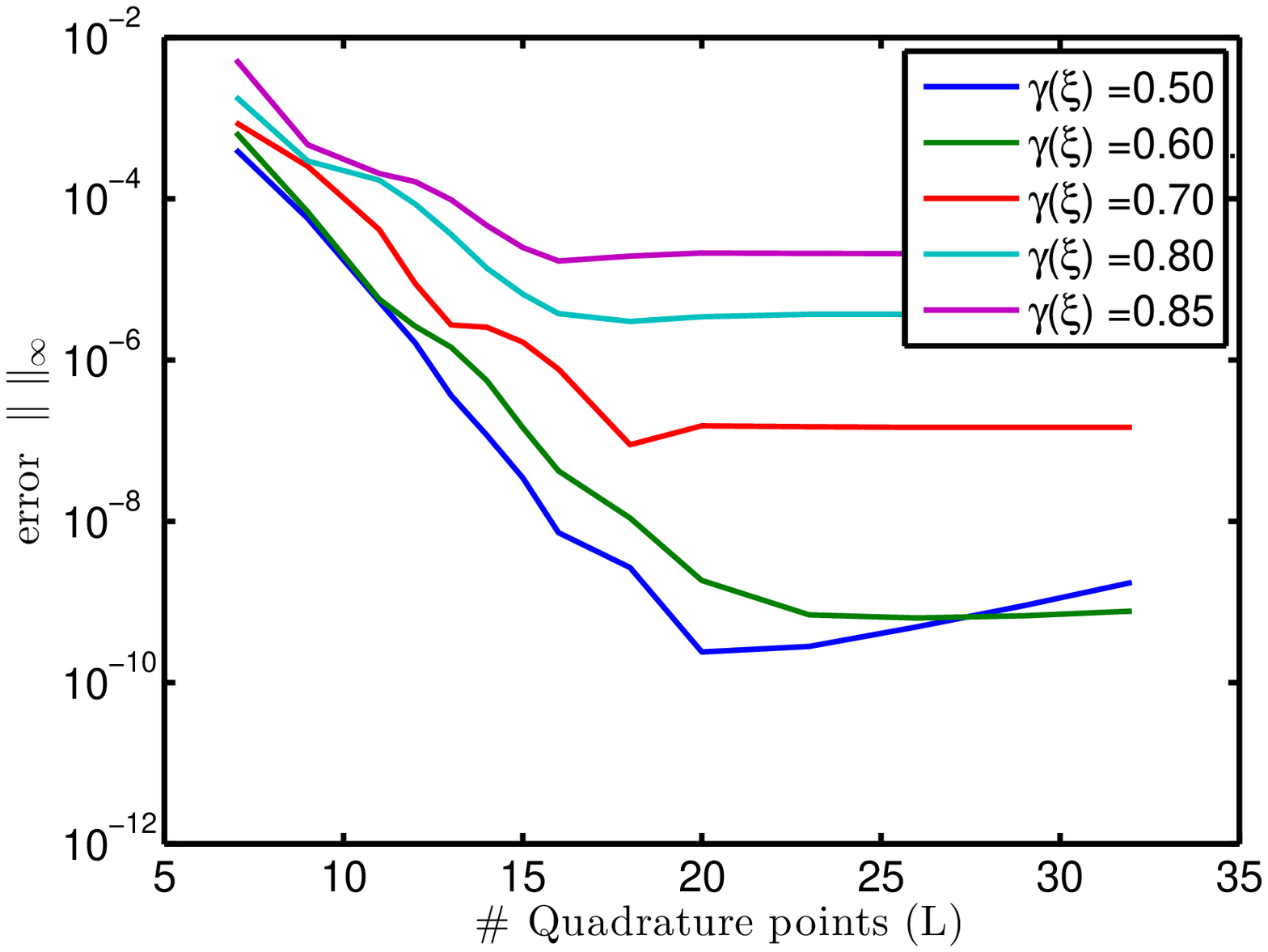}
  \caption{\small Analogous to Figure~\ref{fig:convergenceCont} by using the parameters in \cite{WeiTre}}
  \label{fig:convergenceConttw}
\end{figure}
\end{remark}

\section{Appendix}

We modify the proof given in \cite{JavT} to show the following result.

\begin{lemma}\label{lemma:trapez}
  Let $f$ be analytic and bounded as $|f(z)| \leq M$ for $z \in R_{\tau_0} = \{w \in \mathbb{C} : -a-\tau_0 \leq \Re w \leq a+\tau_0$, $-b < \Im w < b\}$ and some $\tau_0 > 0$. Further, let
\[
I = \int_{-a}^a f(x) dx, \qquad I_L = \frac{a}{L}\sum_{k = -L}^Lf(x_k),
\]
where $x_k = k\tau$, $\tau = a/L$ and $0 < \tau \leq \tau_0$. Then
\[
|I-I_L| \leq \frac{2M}{e^{2\pi b/\tau}-1}+ \tfrac{\log 2}{\pi} \tau\sup_{y \in [-b,b]}|g_{\tau/2}(y)|,
\]
where $g_{\tau/2}(y) = f(a+\tau/2+\mi y)-f(-a-\tau/2+\mi y)$.
\end{lemma}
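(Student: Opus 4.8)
The plan is to compare the trapezoidal sum $I_L$ over $[-a,a]$ with the exact integral $I$ by going through the standard contour-integral (Poisson-summation / residue) argument for trapezoidal quadrature, but tracking carefully the two distinct sources of error: the truncation of the infinite trapezoidal sum to the finite window $[-a,a]$, and the discretization error of the infinite sum itself. First I would introduce the sawtooth-type kernel whose poles at the nodes $x_k = k\tau$ have residue $\tau$; the natural choice is a function built from $\cot(\pi z/\tau)$ or, following \cite{JavT}, a bounded variant such as $\tanh$-type kernel adapted to the strip of half-width $b$. Writing $I_L$ as a contour integral of $f(z)$ times this kernel around the real segment and then deforming the contour to the horizontal lines $\Im z = \pm b$ (staying inside $R_{\tau_0}$, which is legitimate since $\tau \le \tau_0$ and $f$ is analytic and bounded there), the contribution from the horizontal parts produces the term with $e^{-2\pi b/\tau}$, while the contributions from the two short vertical segments near $\Re z = \pm(a+\tau/2)$ — which is where one closes off the finite window — produce the term involving $g_{\tau/2}(y) = f(a+\tau/2+\mi y) - f(-a-\tau/2+\mi y)$.

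Concretely, the steps I would carry out are: (i) fix the kernel $K_\tau(z)$ explicitly, verify it has simple poles exactly at $z = k\tau$ with residue $\tau/(2\pi\mi)$ (up to normalization) and is uniformly bounded by something like $\frac{1}{1-e^{-2\pi b/\tau}}$ on the horizontal lines $\Im z = \pm b$ and decays/behaves controllably on the vertical cross-cuts at $\Re z = \pm(a+\tau/2)$, where it is of size $O(1)$ with the $\log 2/\pi$ constant emerging from $\int_{-b}^b$ of the kernel there; (ii) apply the residue theorem on the rectangle $R_{\tau/2}$ to write $I_L - (\text{something close to } I)$ as the sum of the four boundary integrals; (iii) bound the top and bottom edges by $2M/(e^{2\pi b/\tau}-1)$ using $|f|\le M$ and the kernel bound; (iv) bound the two vertical edges together, exploiting that they combine into a single integral of $g_{\tau/2}(y)$ times the kernel, whose $y$-integral against the kernel contributes the factor $\tfrac{\log 2}{\pi}\tau$; (v) finally check that the ``something close to $I$'' is in fact exactly $I$, i.e. that the midpoint-shifted rectangle is chosen so that no spurious boundary term at $\pm a$ remains — this is why the cross-cuts are placed at $\pm(a+\tau/2)$ rather than at $\pm a$.

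The main obstacle I anticipate is step (iv): getting the precise constant $\tfrac{\log 2}{\pi}$ on the vertical-edge term. This requires choosing the summation kernel so that its integral across the strip $-b \le \Im z \le b$ along a vertical line, i.e. $\int_{-b}^{b} |K_\tau(a+\tau/2+\mi y)|\,dy$ (or the appropriate signed version), evaluates to exactly $\tfrac{\log 2}{\pi}$ independent of $\tau$, $a$, and $b$ — which is the reason \cite{JavT} uses a particular $\tanh$-based kernel rather than $\cot$, and is presumably exactly the modification alluded to in the sentence preceding the lemma. I would reconstruct this kernel, confirm the elementary integral $\int$ giving $\log 2$, and make sure the midpoint offset $\tau/2$ is precisely what makes the node at the boundary fall halfway between two lattice points so that the residue bookkeeping in step (ii) is clean. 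The remaining estimates (iii), (v) are routine given the analyticity and boundedness hypothesis on $R_{\tau_0}$.
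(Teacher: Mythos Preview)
Your outline is essentially the paper's proof: a residue/contour argument on the rectangle $R_{\tau/2}$, with the horizontal edges producing the $2M/(e^{2\pi b/\tau}-1)$ term and the vertical edges at $\Re z=\pm(a+\tau/2)$ combining (by $\tau$-periodicity of the kernel) into the $g_{\tau/2}$ term.

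The one point where you go astray is the ``main obstacle'' you flag in step (iv). No $\tanh$-based kernel is needed, and reconstructing one would be wasted effort. The paper works directly with $\cot$: it writes $I_L=\int_\Gamma f(z)\,(2\mi)^{-1}\cot(\pi z/\tau)\,dz$ by residues and $I=\int_\Gamma f(z)\,u(z)\,dz$ by Cauchy, where $u(z)=\mp\tfrac12$ for $\Im z\gtrless 0$. The difference kernel
\[
S(z)=(2\mi)^{-1}\cot(\pi z/\tau)-u(z)=
\begin{cases}
\dfrac{1}{1-e^{-2\mi\pi z/\tau}},&\Im z>0,\\[1ex]
\dfrac{1}{e^{2\mi\pi z/\tau}-1},&\Im z<0,
\end{cases}
\]
is already bounded on the horizontal edges by $(e^{2\pi b/\tau}-1)^{-1}$, and on the vertical edges one uses the periodicity $S(\pm a\pm\tau/2+\mi y)=S(\tau/2+\mi y)$ to reduce both sides to a single integral against $g_{\tau/2}$. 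The constant $\tfrac{\log 2}{\pi}$ then drops out of the elementary evaluation
\[
\int_{-\infty}^{\infty}|S(\tau/2+\mi y)|\,dy
=2\int_0^{\infty}\frac{dy}{1+e^{2\pi y/\tau}}
=\frac{\tau\log 2}{\pi},
\]
the key being that at the half-lattice point $\Re z=\tau/2$ one has $e^{2\mi\pi z/\tau}=-e^{-2\pi y/\tau}$, turning the ``$1-$'' into a ``$1+$'' in the denominator. So your steps (i)--(v) are right, but the kernel is simply $S$ above and the integral giving $\log 2$ is the one just displayed; there is nothing further to reverse-engineer.
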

\begin{proof}
  Let $\Gamma$ be the boundary of the rectangle $R_{\tau/2} \subset
R_{\tau_0}$ with $\Gamma_1$ the top and bottom sides and $\Gamma_2$
the left and right sides of the rectangle. Using residue calculus
\[
I_L = \int_\Gamma f(z) (2\mi)^{-1}\cot(\pi z/\tau)dz
\]
whereas using the analyticity of $f$
\[
I = \int_\Gamma f(z)u(z) dz
\]
where
\[
u(z) = \left\{
  \begin{array}{rl}
    -\frac12 & \Im z > 0,\vspace{.2cm}\\
    \frac12 & \Im z < 0.
  \end{array}
\right.
\]
Combining these two formulas we have
\[
I_L-I = \int_\Gamma f(z)S(z) dz
\]
where $S(z) = (2\mi)^{-1} \cot(\pi z/\tau)-u(z)$ or when simplified
\[
S(z) = \left\{
  \begin{array}{ll}
    \frac1{1-e^{-2\mi\pi z/\tau}} & \Im z > 0,\vspace{.2cm}\\
    \frac1{e^{2\mi\pi z/\tau}-1} & \Im z < 0.\\
  \end{array}
\right.
\]
We now split the error into two terms
\[
I_L-I = I_1+I_2
\]
corresponding to $\Gamma_1$ and $\Gamma_2$. The first term is easily bounded to give the first term in the above error estimate. Noticing that $S(\pm a\pm\tau/2+\mi y) = S(\tau/2+\mi y)$ we see that
\[
\begin{split}
|I_2| &= \left|\int_{-b}^b g_{\tau/2}(y) S(\tau/2+\mi y) dy\right| \leq \sup_{y \in [-b,b]}|g_{\tau/2}(y)|\int_{-b}^b|S(\tau/2+\mi y)| dy\\
&\leq \sup_{y \in [-b,b]}|g_{\tau/2}(y)|\int_{-\infty}^{\infty}|S(\tau/2+\mi y)| dy
= \tfrac{\log 2}{\pi} \tau \sup_{y \in [-b,b]}|g_{\tau/2}(y)|,
\end{split}
\]
where we have used
\[
\begin{split}
\int_0^\infty |S(\tau/2+\mi y)| dy &= \int_0^\infty \frac{1}{1+e^{2\pi y/\tau}}dy\\
& = \tau \int_0^\infty \frac{1}{1+e^{2\pi u}}du =  \tfrac{\log 2}{2\pi}\tau
\end{split}
\]
and similarly $\int_{-\infty}^0 |S(\tau/2+\mi y)| dy = \frac{\log 2}{2\pi}\tau$.
\end{proof}

\def\cprime{$'$}


\end{document}